\newtheorem{neu}{}[section]
\newtheorem{Cor}[neu]{Corollary}
\newtheorem*{Cor*}{Corollary}
\newtheorem{Thm}[neu]{Theorem}
\newtheorem*{Thm*}{Theorem}
\newtheorem{Prop}[neu]{Proposition}
\newtheorem*{Prop*}{Proposition}
\theoremstyle{definition}
\newtheorem{Lemma}[neu]{Lemma}
\newtheorem*{Rmk*}{Remark}
\newtheorem{Rmk}[neu]{Remark}
\newtheorem*{Ex*}{Example}
\newtheorem*{Qu*}{Question}
\newtheorem{Def}[neu]{Definition}
\newtheorem{Conv}[neu]{Convention}
\newcommand{\Q}{\mathbb{Q}}
\newcommand{\Z}{\mathbb{Z}}
\newcommand{\R}{\mathbb{R}}
\newcommand{\RP}{\R\mathrm{P}}
\newcommand{\pf}{\longrightarrow}
\newcommand{\im}{\mathrm{im\,}}
\newcommand{\om}{\omega}
\newcommand{\A}{\mathcal{A}}
\renewcommand{\P}{\mathcal{P}}
\renewcommand{\L}{\mathscr{L}}
\newcommand{\Cont}{\mathrm{Cont}}
\newcommand{\RFH}{\mathrm{RFH}}
\newcommand{\Crit}{\mathrm{Crit}}
\newcommand{\m}{\mathfrak{m}}
\newcommand{\beq}{\begin{equation}}
\newcommand{\beqn}{\begin{equation}\nonumber}
\newcommand{\eeq}{\end{equation}}
\newcommand{\bea}{\begin{equation}\begin{aligned}}
\newcommand{\bean}{\begin{equation}\begin{aligned}\nonumber}
\newcommand{\eea}{\end{aligned}\end{equation}}
\numberwithin{equation}{section}
\definecolor{Urs}{rgb}{0,.7,0}
\definecolor{Peter}{rgb}{0,0,1}
\definecolor{red}{rgb}{1,0,0}
\newcommand{\p}{\partial}
\renewcommand{\c}{\mathfrak{C}}
\newcommand{\Ps}{\mathscr{P}}
\begin{document}
\title{A variational approach to Givental's nonlinear Maslov index}
\author{Peter Albers}
\author{Urs Frauenfelder}
\address{
    Peter Albers\\
    Department of Mathematics\\
    Purdue University}
\email{palbers@math.purdue.edu}
\address{
    Urs Frauenfelder\\
    Department of Mathematics and Research Institute of Mathematics\\
    Seoul National University}
\email{frauenf@snu.ac.kr}
\keywords{nonlinear Maslov index, discriminant points, Rabinowitz Floer homology}
\begin{abstract}
In this article we consider a variant of Rabinowitz Floer homology in order to define a homological count of discriminant points for paths of contactomorphisms. The growth rate of this count can be seen as an analogue of Givental's nonlinear Maslov index. As an application we prove a Bott-Samelson type obstruction theorem for positive loops of contactomorphisms. 
\end{abstract}
\maketitle

\section{Introduction}

In  \cite{Givental_Periodic_mappings_in_symplectic_topology,Givental_Nonlinear_generalization_of_the_Maslov_index,Givental_The_nonlinear_Maslov_index} Givental introduces his nonlinear Maslov index for the prequantization spaces $\RP^{2n-1}$. This concept had remarkable applications to symplectic topology, for instance concerning the orderability of contact manifolds, see \cite{Eliashberg_Polterovich_Partially_ordered_groups_and_geometry_of_contact_transformations}, the existence of Calabi quasimorphisms, see \cite{Entov_Polterovich_Calabi_quasimorphism_and_quantum_homology, Simon_The_nonlinear_Maslov_index_and_the_Calabi_homomorphism}, and existence of Legendrian chords.

Let $(\Sigma,\xi=\ker\alpha)$ be a cooriented contact manifold. Givental's nonlinear Maslov index is formally defined as the intersection number of a path of contactmorphisms with the discriminant
\beq
\{\varphi\in\Cont(\Sigma,\xi)\mid \exists x\in\Sigma\text{ such that }\varphi(x)=x\text{ and }\varphi^*\alpha|_x=\alpha|_x\}\;.
\eeq
Unfortunately, the discriminant has codimension-1 singularities, see \cite{Givental_The_nonlinear_Maslov_index}. For $\Sigma=\RP^{2n-1}$ Givental resolves this problem by constructing the tail or train which is a subset of the discriminant and defines an cooriented codimension-1 cycle. The nonlinear Maslov index on $\RP^{2n-1}$ is then the intersection number with this cycle.

It seems very difficult to extend Givental's constructions to other contact manifolds. Givental already suggested to use Floer theoretic methods in the general case. In this article we define a homological count of discriminant points for \emph{positive} paths of contactomorphisms, see Definition \ref{def:pos_twisted_periodic_paths}. For this we use a variant of Rabinowitz Floer homology which gives us a variational characterization of the discriminant. For this we require that the contact manifold is symplectically fillable and, as examples show, our homological intersection number depends on the filling.

Using this homological intersection number we define a growth rate for positive paths of contactomorphisms. For unit cotangent bundles this is related to growth rates of geodesics. We refer to \cite{Paternain_Book_Geodesic_flows} for the latter. As an application we prove the following theorem. 

\begin{Thm}\label{thm:no_loops_introduction}
Let $B$ be a closed manifold with finite fundamental group such that the rational cohomology ring has at least two generators. Then $\Sigma:=S^*B$ with its standard contact structure $\xi$ admits no closed positive loops in $\Cont(\Sigma,\xi)$.
\end{Thm}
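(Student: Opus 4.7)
The plan is to argue by contradiction, bracketing the growth rate of the homological discriminant count $\mu_R(T)$ of the Reeb flow $\{\varphi^R_t\}_{0\leq t\leq T}$ on $\Sigma=S^*B$ with the standard contact form between two incompatible estimates. Suppose $\{\varphi_t\}_{t\in S^1}$ were a closed positive loop with period $\tau$ in $\Cont(\Sigma,\xi)$.

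For the upper bound, the $N$-fold iterate of $\varphi$ is again a positive loop, and the homological discriminant count is additive under concatenation of positive paths, so the iterate contributes exactly $N\cdot\mu(\varphi)$. Positivity equips positive paths sharing the same endpoints with a partial order: an arbitrary positive path of length $T$ is dominated, up to a bounded correction, by $\lceil T/\tau\rceil$ copies of the loop $\varphi$. Applied to the Reeb flow this yields a linear upper bound
\beq
\mu_R(T)\;\leq\; C\,T + C'
\eeq
for some constants $C,C'>0$, whenever a positive loop exists.

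For the lower bound, the discriminant points of $\{\varphi^R_t\}_{0\leq t\leq T}$ on $S^*B$ are exactly the closed geodesics of $B$ of length at most $T$, and the variational Rabinowitz Floer construction of the paper should identify the action-filtered homological count with (a suitable version of) the rational homology of the sublevel sets of the energy functional on the free loop space $\Lambda B$, via the Abbondandolo--Schwarz / Viterbo isomorphism. The finite fundamental group assumption on $B$ is used to pass to $\Q$-coefficients on the contractible component and to control $\pi_0(\Lambda B)$. Hence $\mu_R(T)$ dominates the rank of the truncated rational homology of $\Lambda B$. Under the hypothesis that $\H^\ast(B;\Q)$ is not generated by a single element, the theorem of Vigu\'e-Poirrier and Sullivan ensures these ranks are unbounded, and in fact grow faster than any linear function of $T$; combined with Step one, this gives $\mu_R(T)/T\to\infty$, which contradicts the linear upper bound and rules out the existence of $\varphi$.

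The principal obstacle is the linear upper bound. Extracting a genuinely quantitative estimate from the mere existence of a positive loop requires a careful version of the positive comparison principle in $\Cont(\Sigma,\xi)$, together with the additivity of the discriminant count under concatenation and its invariance under small positive perturbations; these structural properties are precisely what the Rabinowitz Floer variant is designed to deliver, and verifying them rigorously is the main analytic input of the paper. A secondary but nontrivial difficulty is the Floer-to-Morse comparison identifying the filtered homological count with loop-space homology for the specific variant used here, and checking that the Vigu\'e-Poirrier/Sullivan growth of Betti numbers actually outpaces \emph{linear} — not merely tends to infinity — so that the contradiction in Step three is genuine.
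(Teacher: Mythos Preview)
Your overall architecture matches the paper's --- a growth-rate contradiction between a linear upper bound forced by the positive loop and a super-linear lower bound from loop-space topology --- but the mechanisms you propose for each bound differ substantially from what the paper actually does.

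For the upper bound you invoke a positive comparison principle in $\Cont(\Sigma,\xi)$ to dominate the Reeb flow by iterates of $\varphi$, and you identify establishing this as ``the principal obstacle'' and ``the main analytic input of the paper.'' The paper bypasses this entirely. Since $\{\varphi_t\}$ is a loop it is already a twisted periodic element of $\P$, and its discriminant count grows linearly by bare periodicity: the critical set in the action window $[0,m]$ is essentially $m$ copies of that in $[0,1]$. The passage to the geodesic side is accomplished not by ordering but by homotoping the contact Hamiltonian $h_t$ through positive $1$-periodic functions to $\tfrac12|p|_g^2$ for a bumpy metric $g$; the growth rate of $\mu(m)$ is preserved along such homotopies (the paper cites \cite{Albers_Frauenfelder_Spectral_invariants_in_RFH}). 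No monotonicity, additivity under concatenation, or comparison principle is used or proved.

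For the lower bound you work in the periodic setting (free loop space) and cite Vigu\'e-Poirrier--Sullivan. The paper instead uses the Lagrangian boundary-value version with $L_i=T^*_{q_i}B$, identifies $\RFH$ in positive degrees with the homology of the \emph{based} loop space $\Omega B$ via Merry's theorem, and then uses two separate inputs: Gromov's theorem to convert at-most-linear growth in action into at-most-linear growth in degree, and a Sullivan minimal-model argument (as in the Bott--Samelson proof in Besse) giving at-least-quadratic growth of $\dim H_{\leq n}(\Omega B;\Q)$ under the two-generator hypothesis. Note that Vigu\'e-Poirrier--Sullivan alone yields only \emph{unbounded} Betti numbers for $\Lambda B$, not a quantitative super-linear rate in action; the Gromov action-to-degree step is what makes the contradiction sharp, and your sketch elides it.
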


This can be thought of as a generalization of the classical Bott-Samelson theorem to positive loops of contactomorphisms.

\subsubsection*{Acknowledgments}
This article was written during a visit of the authors at the Forschungsinstitut f\"ur Mathematik (FIM), ETH Z\"urich. The authors thank the FIM for its stimulating working atmosphere. The present article orginates from inspiring discussions with Leonid Polterovich. The authors express their gratitude.

This material is supported by the National Science Foundation grant DMS-0903856 (PA) and by the Basic Research fund 2010--0007669 funded by the Korean government basic (UF).

\section{Positive contact isotopies}

Let $(\Sigma,\xi)$ be a cooriented, strongly fillable contact manifold. We fix a contact form $\alpha$ for $\xi$.

\begin{Def}\label{def:pos_twisted_periodic_paths}
A smooth path $\{\varphi_t\}_{t\in\R}$ in $\Cont(\Sigma)$ based at the identity is called positive resp.~twisted periodic if the function $h_t:\Sigma\to\R$ defined by
\beq
h_t\big(\varphi_t(x)\big):=\alpha_{\varphi_t(x)}\Big(\frac{d}{dt}\varphi_t(x)\Big)
\eeq 
is positive resp.~1-periodic. We set
\beq
\P\equiv\P(\Sigma,\xi):=\{\{\varphi_t\}_{t\in\R}\mid \{\varphi_t\}_{t\in\R}\text{ is positive and twisted periodic} \}\;.
\eeq
\end{Def}

\begin{Rmk}
The above definition is independent of the chosen contact form as long as it defines the same coorientation. Moreover,
$\varphi_t$ is twisted periodic if and only if $\varphi_{t+1}=\varphi_t\varphi_1$ for all $t\in\R$. In particular, a twisted periodic path satisfies $\varphi_1^m=\varphi_m$ for all $m\in\Z$.
\end{Rmk}

We denote by $(S\Sigma:=\Sigma\times\R_{>0},\om:=d(r\alpha))$, $r\in\R_{\geq0}$, the symplectization of $\Sigma$. 

\begin{Prop}\label{prop:Hamiltonian_lift_of_path_in_Cont}
The contact isotopy $\varphi_t$ admits a lift to a Hamiltonian isotopy $\phi_t$ of $S\Sigma$ as follows:
\beq
\phi_t(x,r):=\Big(\varphi_t(x),\frac{r}{\rho_t(x)}\Big):S\Sigma\to S\Sigma
\eeq
where $\rho_t(x):\Sigma\to\R_{>0}$ is defined by $\varphi_t^*\alpha|_x=\rho_t(x)\alpha|_x$. Moreover, $\phi_t$ is generated by the Hamiltonian function $H_t:S\Sigma\to\R$ given by
\beq
H_t(x,r)=rh_t(x)\;.
\eeq
\end{Prop}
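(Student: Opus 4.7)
The plan has two steps: verify that $\phi_t$ preserves the Liouville primitive $\lambda:=r\alpha$, and then read off the Hamiltonian from $\lambda$. First I would note that $\phi_t$ is a well-defined diffeomorphism of $S\Sigma$: since $\varphi_t$ is a contactomorphism preserving the coorientation of $\xi$, the conformal factor $\rho_t$ is strictly positive, so $r/\rho_t(x)\in\R_{>0}$. The key computation is then to pull back $\lambda$. Because $\phi_t$ has product form and $\lambda=\pi_r\cdot\pi_\Sigma^*\alpha$, one factors
\beq
\phi_t^*\lambda=\phi_t^*(\pi_r)\cdot\phi_t^*(\pi_\Sigma^*\alpha)=\frac{r}{\rho_t(x)}\cdot\varphi_t^*\alpha=\frac{r}{\rho_t(x)}\cdot\rho_t(x)\alpha=r\alpha=\lambda.
\eeq
In particular $\phi_t^*\om=\om$, so $\phi_t$ is symplectic.

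Next I would invoke the standard principle that a symplectic isotopy preserving the Liouville primitive is automatically Hamiltonian. Concretely, set $X_t:=\frac{d}{dt}\phi_t\circ\phi_t^{-1}$. Differentiating $\phi_t^*\lambda=\lambda$ in $t$ gives $\L_{X_t}\lambda=0$, and Cartan's formula yields
\beq
\iota_{X_t}\om=\iota_{X_t}d\lambda=-d(\iota_{X_t}\lambda),
\eeq
so $X_t$ is the Hamiltonian vector field of $H_t:=\lambda(X_t)=r\alpha(X_t)$ (with the convention $\iota_{X_H}\om=-dH$).

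Finally I would identify $\lambda(X_t)$ with the given formula. At the point $\phi_t(x,r)=(\varphi_t(x),r/\rho_t(x))$ the generator is
\beq
X_t\bigl(\phi_t(x,r)\bigr)=\frac{d}{dt}\phi_t(x,r)=\Bigl(\tfrac{d}{dt}\varphi_t(x),\,-\tfrac{r\dot\rho_t(x)}{\rho_t(x)^2}\Bigr).
\eeq
Since $\lambda$ annihilates the $\partial_r$-direction, only the $\Sigma$-component contributes; using the definition of $h_t$ gives
\beq
\lambda(X_t)\bigl(\phi_t(x,r)\bigr)=\frac{r}{\rho_t(x)}\cdot\alpha_{\varphi_t(x)}\Bigl(\tfrac{d}{dt}\varphi_t(x)\Bigr)=\frac{r}{\rho_t(x)}\,h_t\bigl(\varphi_t(x)\bigr).
\eeq
Writing $(y,s):=\phi_t(x,r)$, the right hand side is $s\,h_t(y)=H_t(y,s)$, which is the claimed Hamiltonian.

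I do not expect any real obstacle: every step is a direct computation. The only place that merits care is bookkeeping the sign convention for $X_H$ and remembering that the vertical component of $X_t$, though it involves $\dot\rho_t$, drops out of $\lambda(X_t)$ because $r\alpha$ is horizontal.
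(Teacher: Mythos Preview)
Your proof is correct and follows essentially the same route as the paper: show $\phi_t^*\lambda=\lambda$, deduce $\L_{X_t}\lambda=0$, then use Cartan's formula to conclude $X_t=X_{H_t}$ with $H_t=\lambda(X_t)=rh_t$. The paper packages the Cartan step as a separate lemma and is terser about the evaluation point, whereas you spell out the identification $(y,s)=\phi_t(x,r)$ more carefully; otherwise the arguments coincide.
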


The proof of Proposition \ref{prop:Hamiltonian_lift_of_path_in_Cont} can be found after Remark \ref{Rmk:train_point_is_equivalent_to_periodic_orbit_of_Hamiltonian_lift}.

\begin{Def}\label{def:contact_Hamiltonian}
The function $H_t:S\Sigma\to\R$ is called the contact Hamiltonian associated to $\{\varphi_t\}$. 
\end{Def}

Following Givental \cite{Givental_Periodic_mappings_in_symplectic_topology,Givental_Nonlinear_generalization_of_the_Maslov_index,Givental_The_nonlinear_Maslov_index} we make the following definition.

\begin{Def}
Let $\{\varphi_t\}$ be a smooth path in $\Cont(\Sigma)$. Then a pair $(x,\eta)\in\Sigma\times\R$ is called a discriminant point (with respect to $\{\varphi_t\}$) if 
\beq
\left\{
\begin{array}{ll}
 \varphi_\eta(x)=x\\
 \varphi_\eta^*\alpha|_x=\alpha|_x
\end{array}
\right.
\eeq 
\end{Def}

\begin{Rmk}\label{Rmk:train_point_is_equivalent_to_periodic_orbit_of_Hamiltonian_lift}
We point out that for a pair $(x,\eta)$ being a discriminant point is equivalent to $\phi_\eta(x,r)=(x,r)$ for any $r>0$, see Proposition \ref{prop:Hamiltonian_lift_of_path_in_Cont}. 
\end{Rmk}

\begin{proof}[Proof of Proposition \ref{prop:Hamiltonian_lift_of_path_in_Cont}]
We prove the stronger fact that $\phi_t$ preserves the 1-form $r\alpha$:
\beq
\phi_t^*(r\alpha)|_{(x,r)}=\tfrac{r}{\rho_t(x)}\cdot\varphi_t^*\alpha|_x=r\alpha|_x\;.
\eeq
We set
\beq
Y_t(\varphi_t(x)):=\frac{d}{dt}\varphi_t(x)
\eeq
and compute
\bea\label{eqn_Ham_vfield_for_Phi_t}
X_t(\phi_t(x,r))&:=\frac{d}{dt}\phi_t(x,r)\\
&=Y_t(\varphi_t(x))-r\frac{\dot{\rho}_t(x)}{\rho_t^2(x)}\frac{\p}{\p r}\;.
\eea
Since $\phi_t$ preserves $\lambda:=r\alpha$ we use Lemma \ref{lem:Hamiltonian_vfield_for_exact_symplectic} and compute
\bea
H_t=\lambda(X_t)=r\alpha(Y_t)=rh_t\;.
\eea
%
%
%
%
%
%
%
\end{proof}

\begin{Lemma}\label{lem:Hamiltonian_vfield_for_exact_symplectic}
Let $\om=d\lambda$ be an exact symplectic form and $X$ a vector field satisfying
\beq
\L_X\lambda=0
\eeq
where $\L$ is the Lie derivative. Then the Hamiltonian vector field $X_H$ of the function $H:=\lambda(X)$ equals $X$:
\beq
X_H=X\;.
\eeq
\end{Lemma}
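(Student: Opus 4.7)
The entire lemma follows from a one-line application of Cartan's magic formula together with the definition $H := \lambda(X)$, so the plan is essentially to unwind the definitions and read off the identity.

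First I would apply the Cartan formula to the assumption $\L_X \lambda = 0$:
\beq
0 = \L_X \lambda = d(\iota_X \lambda) + \iota_X d\lambda = dH + \iota_X \om,
\eeq
where the second equality uses $H = \lambda(X) = \iota_X \lambda$ and $\om = d\lambda$. Rearranging gives $\iota_X \om = -dH$, which is precisely the defining equation of the Hamiltonian vector field $X_H$ (with the sign convention consistent with the one used in Proposition \ref{prop:Hamiltonian_lift_of_path_in_Cont}, so that $H_t = rh_t$ is the correct generator of $\phi_t$). Since $\om$ is nondegenerate, this equation has a unique solution, so $X = X_H$.

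The only subtlety is making sure that the sign convention for $X_H$ agrees with the one implicit in the preceding proposition. One can cross-check this at the end of the proof of Proposition \ref{prop:Hamiltonian_lift_of_path_in_Cont}: the vector field $X_t$ computed in \eqref{eqn_Ham_vfield_for_Phi_t} should satisfy $\iota_{X_t} \om = -dH_t$ for $H_t = r h_t$, and this determines the sign unambiguously. There is no real obstacle here; the content of the lemma is that on an exact symplectic manifold, a vector field which preserves the primitive $\lambda$ is automatically Hamiltonian with Hamiltonian $\lambda(X)$.
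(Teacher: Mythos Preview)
Your proof is correct and is essentially the same as the paper's: both apply Cartan's formula to $H=\iota_X\lambda$ (equivalently to $\L_X\lambda=0$) to obtain $dH=-\iota_X\om$, and then invoke nondegeneracy of $\om$. Your additional remarks on the sign convention are fine but not needed for the argument.
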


\begin{proof}
From $H=i_X\lambda$ we compute using Cartan's formula
\bea
dH&=d(i_X\lambda)\\
&=\L_X\lambda - i_Xd\lambda\\
&=-i_X\om\;.
\eea
\end{proof}

\begin{Rmk}
In particular, we have the equality
\beq
\lambda(X_H)=H.
\eeq 
\end{Rmk}

\section{The Rabinowitz action functional for time-dependent Hamiltonians and a variational approach to discriminant points}

Let $(M,\om=d\lambda)$ be an exact symplectic manifold and $F:M\times\R\to\R$ a smooth function. We denote by $\L:=W^{1,2}(\R/\Z,M)$ the free loop space of $M$ and define the Rabinowitz action functional
\bea\label{eqn:Rabinowitz_functional}
\A:\L\times\R&\pf\R\\
(u,\eta)&\mapsto \A(u,\eta)=\int_0^1u^*\lambda-\eta\int_0^1F_{\eta t}(u(t))dt\;.
\eea
Its critical points $(u,\eta)\in\Crit\A$ satisfy
\beq
\left.
\begin{array}{ll}
\displaystyle\dot{u}(t)=\eta X_{F_{\eta t}}(u(t))\\[2ex]
\displaystyle\int_0^1\Big[F_{\eta t}(u(t)) +\eta t\dot{F}_{\eta t}(u(t))\Big]dt=0
\end{array}\right\}
\eeq
By the first equation we have 
\bea
\frac{d}{dt}F_{\eta t}(u(t))&=\eta\dot{F}_{\eta t}(u(t))+dF_{\eta t}(u(t))[\dot{u}(t)]\\
&=\eta \dot{F}_{\eta t}(u(t))+dF_{\eta t}(u(t))[\eta X_{F_{\eta t}}(u(t))]\\
&=\eta \dot{F}_{\eta t}(u(t))-\underbrace{\om\Big(X_{F_{\eta t}}(u(t)),\eta X_{F_{\eta t}}(u(t))\Big)}_{=0}\\
\eea
Thus, the second equation becomes after integration by parts
\bea
0&=\int_0^1\Big[F_{\eta t}(u(t)) +\eta t\dot{F}_{\eta t}(u(t))\Big]dt\\
&=\int_0^1\Big[F_{\eta t}(u(t)) +t\frac{d}{dt}F_{\eta t}(u(t))\Big]dt\\
&=\int_0^1\Big[\underbrace{F_{\eta t}(u(t)) -\left(\frac{d}{dt}t\right)F_{\eta t}(u(t))}_{=0}\Big]dt+tF_{\eta t}(u(t))\Big|_0^1\\
&=F_\eta(u(1))\;.
\eea
Thus, we proved the following lemma.

\begin{Lemma}\label{lem:critical_points}
A pair $(u,\eta)\in\L\times\R$ is a critical point of $\A$ if and only if the following equations hold
\beq
\left.
\begin{array}{ll}
\displaystyle\dot{u}(t)=\eta X_{F_{\eta t}}(u(t))\\[2ex]
\displaystyle F_\eta(u(1))=0
\end{array}\right\}
\eeq
\end{Lemma}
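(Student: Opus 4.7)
The plan is to identify the Euler--Lagrange conditions for $\A$ by computing its two partial derivatives separately and interpreting their vanishing. The challenge is to show that the $\eta$-derivative, which a priori produces an integral condition, actually collapses to the pointwise condition $F_\eta(u(1))=0$ once the $u$-variation equation is imposed.

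First, for a variation $\xi\in T_u\L$, I would compute $d\A(u,\eta)(\xi,0)$. The standard first variation of $\int_0^1 u^*\lambda$ over the loop space yields $\int_0^1\om(\dot u(t),\xi(t))\,dt$ via $\om=d\lambda$, with no boundary contribution because $u$ is a loop. The variation of the Lagrange multiplier term contributes $\eta\int_0^1\om(X_{F_{\eta t}}(u(t)),\xi(t))\,dt$ by the very definition of the Hamiltonian vector field. Nondegeneracy of $\om$ and the fundamental lemma of the calculus of variations then yield the first equation $\dot u(t)=\eta X_{F_{\eta t}}(u(t))$.

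Second, differentiating at fixed $u$ in $\eta$, only the Lagrange multiplier term contributes. The product rule combined with the $\eta$-dependence inside $F_{\eta t}$ gives $\partial_\eta\A(u,\eta)=-\int_0^1[F_{\eta t}(u(t))+\eta t\,\dot F_{\eta t}(u(t))]\,dt$, where $\dot F$ denotes differentiation with respect to the time argument of $F$. This is an integral condition, and the goal is to reduce it to a boundary condition at $t=1$.

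Third, I would use the Hamiltonian equation from the first step to rewrite the total time derivative of $F_{\eta t}(u(t))$. By the chain rule, $\tfrac{d}{dt}F_{\eta t}(u(t))=\eta\dot F_{\eta t}(u(t))+dF_{\eta t}(u(t))[\dot u(t)]$; substituting the Hamiltonian equation and using antisymmetry of $\om$ kills the second summand. Plugging the resulting identity $\eta\dot F_{\eta t}(u(t))=\tfrac{d}{dt}F_{\eta t}(u(t))$ into the $\eta$-equation and integrating by parts on the $t\,\tfrac{d}{dt}F$-term produces a cancellation of the two $F_{\eta t}(u(t))$ integrals, leaving only the boundary term $F_\eta(u(1))$. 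The main (conceptual) obstacle is precisely this step: to notice that the apparently global integral constraint coming from $\partial_\eta\A=0$ localizes to a pointwise condition at $t=1$ once the Hamiltonian equation is imposed. This localization is what makes $\A$ useful as a Rabinowitz-type action functional, as the Lagrange multiplier $\eta$ ends up selecting zeros of $F_\eta$ at the endpoint rather than an averaged quantity.
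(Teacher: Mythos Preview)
Your proposal is correct and follows essentially the same approach as the paper: compute the two partial derivatives of $\A$, obtain the Hamiltonian equation from the $u$-variation, then use that equation to rewrite $\eta\dot F_{\eta t}(u(t))$ as $\tfrac{d}{dt}F_{\eta t}(u(t))$ and integrate by parts so that the integral condition collapses to the boundary value $F_\eta(u(1))$. The paper performs exactly this computation in the paragraph preceding the lemma.
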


\begin{Lemma}\label{lem:lambda=F_t+kappa==>A(u,eta)=kappa_eta}
If the function $F_t$ satisfies 
\beq
\lambda(X_{F_t})=F_t+\kappa
\eeq
for some $\kappa\in\R$ then
\beq
\A(u,\eta)=\kappa\eta\qquad\forall(u,\eta)\in\Crit\A\;.
\eeq
\end{Lemma}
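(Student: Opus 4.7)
The plan is to evaluate $\A(u,\eta)$ directly on a critical point using the first critical-point equation from Lemma \ref{lem:critical_points}, rather than invoking the second equation. The second equation $F_\eta(u(1)) = 0$ will not be needed here; the whole identity comes from the flow equation together with the pointwise hypothesis on $\lambda(X_{F_t})$.

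Concretely, I would proceed as follows. First, pull back $\lambda$ along $u$ and substitute $\dot u(t) = \eta\, X_{F_{\eta t}}(u(t))$, obtaining
\beq
u^*\lambda = \lambda_{u(t)}\bigl(\dot u(t)\bigr)\, dt = \eta\,\lambda_{u(t)}\bigl(X_{F_{\eta t}}(u(t))\bigr)\, dt.
\eeq
Next, apply the assumption $\lambda(X_{F_t}) = F_t + \kappa$ (evaluated at parameter value $\eta t$) to rewrite the integrand as $\eta\bigl(F_{\eta t}(u(t)) + \kappa\bigr)\,dt$. Integrating over $[0,1]$ then yields
\beq
\int_0^1 u^*\lambda = \eta\int_0^1 F_{\eta t}(u(t))\, dt + \kappa\eta.
\eeq

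Finally, I would plug this into the definition \eqref{eqn:Rabinowitz_functional} of $\A$. The term $\eta\int_0^1 F_{\eta t}(u(t))\,dt$ cancels against the Lagrange-multiplier term in $\A(u,\eta)$, leaving exactly $\A(u,\eta) = \kappa\eta$, as claimed.

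There is no real obstacle: the argument is a two-line substitution. The only thing worth double-checking is that the hypothesis $\lambda(X_{F_t}) = F_t + \kappa$ is meant to hold for all $t$ (so that it applies at the rescaled time $\eta t$), which is the natural reading. Note also that this computation does not use the second critical-point equation, so it even holds for any loop $u$ satisfying only $\dot u = \eta X_{F_{\eta t}}(u)$.
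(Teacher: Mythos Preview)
Your proof is correct and is essentially identical to the paper's own argument: substitute $\dot u = \eta X_{F_{\eta t}}(u)$ into $\int_0^1 u^*\lambda$, apply $\lambda(X_{F_t}) = F_t + \kappa$, and cancel. Your additional remark that only the first critical-point equation is used is accurate and not stated in the paper.
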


\begin{proof}
Using the critical point equation for $\A$ we see
\bea
\A(u,\eta)&=\int_0^1\lambda\big[\eta X_{F_{\eta t}}(u(t))\big]dt-\eta\int_0^1F_{\eta t}(u(t))dt\\
&=\int_0^1\eta F_{\eta t}(u(t))dt+\kappa\eta-\eta\int_0^1F_{\eta t}(u(t))dt\\
&=\kappa\eta\;.
\eea
\end{proof}

\begin{Rmk}\label{rmk:Rabinowitz_action_for_positive_path}
If $F_t=r h_t(x)-\kappa$ where $rh_t$ is the contact Hamiltonian of a positive and twisted periodic path $\{\varphi_t\}\in\P$ then discriminant points are in 1-1 correspondence with critical points of $\A_\kappa:=\frac1\kappa\A$, see Proposition \ref{prop:Hamiltonian_lift_of_path_in_Cont} and Lemma \ref{lem:critical_points}.
\end{Rmk}

\section{A homological Maslov index (periodic case)}

Let $(\Sigma,\xi)$ be a closed, cooriented contact manifold and $\alpha$ a fixed contact form. We assume that there exists a compact exact symplectic manifold $(\widetilde{M},d\widetilde{\lambda})$ with $\Sigma=\p \widetilde{M}$ and $\alpha=\lambda|_\Sigma$. We attach to $\widetilde{M}$ the positive part of the symplectization of $\Sigma$, that is,
\beq
M:=\widetilde{M}\cup_\Sigma\Sigma\times\{r\geq1\}\;.
\eeq
On $M$ we define a 1-form $\lambda$ by $\widetilde{\lambda}$ on $\widetilde{M}$ and $\lambda=r\alpha$ on $\Sigma\times\{r\geq1\}$. In particular, $(M,\om=d\lambda)$ is an exact symplectic manifold. We point out, that the entire symplectization $S\Sigma$ of $\Sigma$ embeds into $M$ via the flow of the Liouville vector field of $\lambda$.

\begin{Conv}
In the following we only consider positive and twisted periodic path, i.e.~$\{\varphi_t\}\in\P(\Sigma,\xi)$, see Definition \ref{def:pos_twisted_periodic_paths}. 
\end{Conv}

We fix $R,\kappa>1$ and choose a smooth function $\beta_R:\R_{\geq0}\to[0,1]$ satisfying
\beq\label{eqn:def_beta}
\beta_R(r)=
\begin{cases}
0&r\leq 1\\
1& 2\leq r\leq R\kappa\\
0& r\geq R\kappa+1
\end{cases}
\eeq
and
\beq\label{eqn:def_frak_h}
\begin{cases}
0\leq\beta'_R(r)\leq2&1\leq r\leq 2\\
-2\leq\beta'_R(r)\leq0& R\kappa\leq r\leq R\kappa+1
\end{cases}
\eeq
Moreover, we define 
\beq
\mathfrak{h}(r)=
\begin{cases}
m&r\leq 2\\
M& r>2
\end{cases}
\eeq
where
\beq
0< m\leq\min\{h_t(x)\mid x\in\Sigma, t\in\R\}
\eeq
and
\beq
M\geq \max\{h_t(x)\mid x\in\Sigma, t\in\R\}\;.
\eeq
$m$ and $M$ are well-defined since $h_t$ is 1-periodic. We set
\beq
F_t^{\kappa,R}(z):=
\begin{cases}
r\big[ \beta_R(r)h_t(x)+(1-\beta_R(r))\mathfrak{h}(r)\big]-\kappa & z=(x,r)\in S\Sigma\\
 -\kappa&z\in M\setminus S\Sigma
\end{cases}
\eeq
and consider the normalized Rabinowitz action functional
\bea\label{eqn:normalized_Rabinowitz_fctl}
\A_{\kappa,R}:\L\times\R&\pf\R\\
(u,\eta)&\mapsto \A_{\kappa,R}(u,\eta)=\frac1\kappa\left(\int_0^1u^*\lambda-\eta\int_0^1F^{\kappa,R}_{\eta t}(u(t))dt\right)\;.
\eea
Obviously, the critical point equation does not change if we divide by $\kappa$, thus $(u,\eta)\in\Crit\A_{\kappa,R}$ if and only if
\beq
\left.
\begin{array}{ll}
\displaystyle\dot{u}(t)=\eta X_{F^{\kappa,R}_{\eta t}}(u(t))\\[2ex]
\displaystyle F^{\kappa,R}_\eta(u(1))=0
\end{array}\right\}
\eeq
A glimpse at Lemma \ref{lem:lambda=F_t+kappa==>A(u,eta)=kappa_eta} reveals the reason why we divide by $\kappa$.

\begin{Lemma}\label{lem:eta_bounded_by_A}
Let $(u,\eta)\in\Crit\A_{\kappa,R}$ be a critical point. Then
\beq
|\A_{\kappa,R}(u,\eta)|\geq|\eta|\;.
\eeq
\end{Lemma}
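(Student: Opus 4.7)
The plan is to express $\A_{\kappa,R}(u,\eta)$ on the critical set in the form of Lemma \ref{lem:lambda=F_t+kappa==>A(u,eta)=kappa_eta}, and then exploit the fact that the function $F^{\kappa,R}_t$ has been \emph{designed} so that $\lambda(X_{F^{\kappa,R}_t})-F^{\kappa,R}_t\geq\kappa$ pointwise. The desired inequality then follows by integrating.

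First I would substitute the critical point equation $\dot u(t)=\eta X_{F^{\kappa,R}_{\eta t}}(u(t))$ into $\int_0^1 u^*\lambda$ to obtain
\beq
\A_{\kappa,R}(u,\eta)\;=\;\frac{\eta}{\kappa}\int_0^1\bigl[\lambda(X_{F^{\kappa,R}_{\eta t}})-F^{\kappa,R}_{\eta t}\bigr]\bigl(u(t)\bigr)\,dt\;.
\eeq
This is just the calculation in the proof of Lemma \ref{lem:lambda=F_t+kappa==>A(u,eta)=kappa_eta}, carried out without assuming $\lambda(X_{F_t})=F_t+\kappa$ identically.

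Next I would identify $\lambda(X_{F^{\kappa,R}_t})-F^{\kappa,R}_t$ pointwise. On $M\setminus S\Sigma$, $F^{\kappa,R}_t\equiv-\kappa$ is constant, so $X_{F^{\kappa,R}_t}=0$ and the expression equals $\kappa$. On $S\Sigma$ I would use the Liouville vector field $Y=r\p_r$, which satisfies $i_Y\om=\lambda$. Combined with $i_{X_H}\om=-dH$ this yields the identity $\lambda(X_H)=dH(Y)=r\p_rH$. Writing $G_t(x,r):=F^{\kappa,R}_t(x,r)+\kappa=r\bigl[\beta_R(r)h_t(x)+(1-\beta_R(r))\mathfrak{h}(r)\bigr]$, I get
\beq
\lambda(X_{F^{\kappa,R}_t})-F^{\kappa,R}_t\;=\;r\p_rG_t-G_t+\kappa\;=\;\kappa+r^2\bigl[\beta'_R(r)(h_t(x)-\mathfrak{h}(r))+(1-\beta_R(r))\mathfrak{h}'(r)\bigr].
\eeq

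The key step is to check that the bracketed term is nonnegative in each of the regions dictated by the supports of $\beta'_R$ and $1-\beta_R$:
\begin{itemize}
\item On $1\le r\le 2$: here $\mathfrak{h}(r)=m\le h_t(x)$, so $h_t(x)-\mathfrak{h}(r)\ge0$; also $\beta'_R(r)\ge0$ by \eqref{eqn:def_frak_h}; and $\mathfrak{h}'(r)=0$. Hence the term is $\ge 0$.
\item On $R\kappa\le r\le R\kappa+1$: here $\mathfrak{h}(r)=M\ge h_t(x)$ and $\beta'_R(r)\le0$, so $\beta'_R(h_t-\mathfrak{h})\ge0$; again $\mathfrak{h}'(r)=0$.
\item On all other intervals both $\beta'_R$ and $(1-\beta_R)\mathfrak{h}'$ vanish identically.
\end{itemize}
Consequently $\lambda(X_{F^{\kappa,R}_t})-F^{\kappa,R}_t\ge\kappa$ pointwise on all of $M$, and integrating over $t\in[0,1]$ gives $\int_0^1[\lambda(X_{F^{\kappa,R}_{\eta t}})-F^{\kappa,R}_{\eta t}](u(t))\,dt\ge\kappa$. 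Together with the formula above this yields
\beq
|\A_{\kappa,R}(u,\eta)|\;=\;\frac{|\eta|}{\kappa}\int_0^1\bigl[\lambda(X_{F^{\kappa,R}_{\eta t}})-F^{\kappa,R}_{\eta t}\bigr](u(t))\,dt\;\ge\;|\eta|.
\eeq

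The only real issue is the bookkeeping in the sign check, which is essentially tautological once one notes that $\beta_R$ and $\mathfrak{h}$ were constructed for exactly this purpose: the modifications interpolating between $h_t$ and the constants $m,M$ happen only where the interpolation is in the ``favorable'' direction. The conceptual content is the Liouville identity $\lambda(X_H)=dH(Y)$, which reduces the problem to a non-negativity statement about the radial derivative of $G_t$.
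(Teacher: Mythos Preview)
Your proof is correct and follows essentially the same approach as the paper: both substitute the critical point equation into the action, use the Liouville identity $\lambda(X_H)=r\partial_r H$ to reduce to a pointwise sign check, and verify $\beta'_R(r)(h_t-\mathfrak{h})\ge 0$ on the two interpolation intervals. The only cosmetic difference is that you retain the term $(1-\beta_R)\mathfrak{h}'$, which the paper drops silently since $\mathfrak{h}'=0$ wherever $1-\beta_R\neq0$.
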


\begin{proof}
We compute
\bea
\lambda(X_{F^{\kappa,R}_{\eta t}})&=dF^{\kappa,R}_{\eta t}\big(r\tfrac{\p}{\p r}\big)\\
&=F^{{\kappa,R}}_{\eta t}+\kappa+r^2\beta'_R(r)\big[h_{\eta t}(x)-\mathfrak{h}(r)\big]\;.
\eea
We point out that 
\beq
\beta'_R(r)\big[h_{\eta t}(x)-\mathfrak{h}(r)\big]\geq0
\eeq
holds, see \eqref{eqn:def_beta} and \eqref{eqn:def_frak_h}. We estimate
\bea
\left|\A_{\kappa,R}(u,\eta)\right|&= \frac1\kappa\left|\int_0^1\lambda\big(\eta X_{F^{\kappa,R}_{\eta t}}(u)\big)-\eta\int_0^1F^{\kappa,R}_{\eta t}(u)dt\right|\\
&= \frac{|\eta|}{\kappa}\left|\int_0^1\Big[F^{\kappa,R}_{\eta t}(u)+\kappa+r^2\beta_R'(r)\big[h_{\eta t}(x)-\mathfrak{h}(r)\big]-F^{\kappa,R}_{\eta t}(u)\Big]dt\right|\\
&= \frac{|\eta|}{\kappa}\left|\kappa+\int_0^1r^2\underbrace{\beta_R'(r)\big[h_{\eta t}(x)-\mathfrak{h}(r)\big]}_{\geq0}dt\right|\\
&\geq|\eta|\;.
\eea
This finishes the proof.
\end{proof}

\begin{Prop}\label{prop:large_kappa_and_R_implies_critical_points_are_train_points}
Given $a<b$ there exists $\kappa_0=\kappa_0(a,b)>0$ and $R_0=R_0(a,b)\geq0$ such that for all $\kappa\geq\kappa_0$ and $R\geq R_0$ the following holds. Let $(u,\eta)\in\Crit\A_{\kappa,R}$ be a critical point with critical value between $a$ and $b$
\beq
a<\A_{\kappa,R}(u,\eta)<b
\eeq
then $u(t)=(x(t),r(t))\in \Sigma\times(2,R\kappa)$ for all $t\in S^1$ and $\A_{\kappa,R}(u,\eta)=\eta$.
\end{Prop}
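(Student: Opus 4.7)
The plan is to control a critical point $(u,\eta)$ with $a<\A_{\kappa,R}(u,\eta)<b$ in four steps: (i) use the action bound to bound $\eta$ uniformly in $\kappa,R$; (ii) pin down $u(1)$ via the endpoint equation $F^{\kappa,R}_\eta(u(1))=0$; (iii) show the $r$-coordinate of $u$ has small logarithmic variation, thus confining $u$ to the good region $\Sigma\times(2,R\kappa)$; and (iv) invoke Lemma \ref{lem:lambda=F_t+kappa==>A(u,eta)=kappa_eta} to identify the action with $\eta$.

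For step (i), Lemma \ref{lem:eta_bounded_by_A} combined with $a<\A_{\kappa,R}(u,\eta)<b$ gives $|\eta|\leq E:=\max(|a|,|b|)$. For step (ii), I would inspect the equation $F^{\kappa,R}_\eta(u(1))=0$ region by region: on $M\setminus S\Sigma$ and on $\{r\geq R\kappa+1\}$ one has $F^{\kappa,R}_\eta=-\kappa\neq 0$; on $\{r\leq 1\}$ one has $F^{\kappa,R}_\eta=rm-\kappa<0$ once $\kappa>m$; on the lower transition $[1,2]$ one has $F^{\kappa,R}_\eta\leq 2M-\kappa<0$ once $\kappa>2M$; and on the upper transition $[R\kappa,R\kappa+1]$ one has $F^{\kappa,R}_\eta\geq R\kappa m-\kappa>0$ once $R>1/m$. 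Hence for $\kappa>2M$ and $R>1/m$, $u(1)$ lies in the good region and $r(1)=\kappa/h_\eta(x(1))\in[\kappa/M,\kappa/m]$.

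Step (iii) is the heart of the argument. A direct computation of $X_{F^{\kappa,R}_t}$ on $S\Sigma$, using $\iota_{R_\alpha}d\alpha=0$ for the Reeb vector field $R_\alpha$, shows that its $\partial_r$-component at $(x,r)$ equals $-r\beta_R(r)(R_\alpha h_t)(x)$, while $X_{F^{\kappa,R}_t}\equiv 0$ on $M\setminus S\Sigma$. If $u(t_0)\in M\setminus S\Sigma$ for some $t_0$, then uniqueness of ODE solutions at a zero of $X_{F^{\kappa,R}_t}$ forces $u$ to be constant, contradicting step (ii); hence $u(t)\in S\Sigma$ for all $t$ and
\beq
\left|\tfrac{d}{dt}\log r(t)\right|\leq |\eta|\,\|R_\alpha h\|_{C^0}\leq EC, \qquad C:=\|R_\alpha h\|_{C^0}.
\eeq
Integrating and using $r(1)\in[\kappa/M,\kappa/m]$ confines $r(t)$ to $[\kappa M^{-1}e^{-EC},\kappa m^{-1}e^{EC}]$ for all $t$. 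Choosing $\kappa_0:=2Me^{EC}$ and $R_0:=m^{-1}e^{EC}$, this interval lies in $(2,R\kappa)$ as soon as $\kappa\geq\kappa_0$ and $R\geq R_0$. For step (iv), on the good region one has $\beta_R\equiv 1$ and hence $\lambda(X_{F^{\kappa,R}_t})=F^{\kappa,R}_t+\kappa$ along $u$ (by the computation in the proof of Lemma \ref{lem:eta_bounded_by_A}); Lemma \ref{lem:lambda=F_t+kappa==>A(u,eta)=kappa_eta} together with the $1/\kappa$ normalization then yields $\A_{\kappa,R}(u,\eta)=\eta$.

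The main obstacle is the bootstrap flavor of step (iii): the log-$r$ estimate is a priori only a statement on $S\Sigma$, so one must first rule out that $u$ visits the stationary region $M\setminus S\Sigma$, and only then integrate. The decisive feature that makes the whole argument go through is the mismatch between the $\kappa,R$-independent bound on the logarithmic variation of $r$ and the logarithmic width $\log(R\kappa/2)$ of the good region, which can be made arbitrarily large by enlarging $\kappa$ and $R$.
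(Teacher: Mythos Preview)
Your four-step plan coincides with the paper's proof: bound $\eta$ via Lemma~\ref{lem:eta_bounded_by_A}, locate $u(1)$ by case analysis of $F^{\kappa,R}_\eta(u(1))=0$, control $\log r(t)$ by an ODE estimate, and conclude with Lemma~\ref{lem:lambda=F_t+kappa==>A(u,eta)=kappa_eta}. The paper carries out step~(iii) with the formula $\dot r=-\eta r\,\dot\rho_{\eta t}/\rho_{\eta t}^2$, valid only on $\Sigma\times[2,R\kappa]$, and therefore runs an explicit open--closed argument on the set $I_0\subset[0,1]$ where $u$ lies in that strip; your version, writing the $\partial_r$-component globally on $S\Sigma$ as $-r\beta_R(r)(R_\alpha h_t)$, is a pleasant variant because $|\beta_R|\le 1$ makes the log-$r$ estimate hold on all of $S\Sigma$, not just on the good strip.

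Two small slips. First, on $\{r\ge R\kappa+1\}$ one has $\beta_R=0$ and $\mathfrak h=M$, so $F^{\kappa,R}_\eta=rM-\kappa$, not $-\kappa$; your conclusion that there are no zeros there is nevertheless correct once $R>1/M$ (which follows from $R>1/m$). Second, your device for forcing $u\subset S\Sigma$ --- ``$X_{F^{\kappa,R}_t}\equiv 0$ on $M\setminus S\Sigma$, hence uniqueness makes $u$ constant'' --- does not quite work: the skeleton $M\setminus S\Sigma$ is generally not open, so constancy of $F$ there does not give $X_F=0$, and for a time-dependent vector field a single zero does not pin the solution. The clean fix is exactly the bootstrap you flag in your last paragraph: since your log-$r$ bound holds throughout $S\Sigma$, the set of times with $r(t)\in(2,R\kappa)$ is open, closed, and contains $t=1$, hence equals $[0,1]$; this is precisely the paper's argument.
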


\begin{proof}
By Lemma \ref{lem:eta_bounded_by_A} we have
\beq
|\eta|\leq\max\{|a|,|b|\}\;.
\eeq
We set
\beq\label{eqn:def_C}
C\equiv C(a,b):=\max\left\{\left|\eta\cdot \frac{\dot{\rho}_{\eta t}(x(t))}{\rho_{\eta t}^2(x(t))}\right|\colon x\in\Sigma, t\in[0,1],|\eta|\leq\max\{|a|,|b|\}\right\}\;.
\eeq
We fix $\kappa_0>\max\{1,3Me^{C}\}$ , $R_0>\max\Big\{\frac{1}{m} e^C+1,\frac{1}{M} \Big\}$ and choose $\kappa\geq\kappa_0$ and $R\geq R_0$. \\

\noindent\textbf{Step 1:} $u(1)=(x(1),r(1))\in \Sigma\times[2,R\kappa]$ and $r(1)h_\eta(x(1))=\kappa$.

\begin{proof}[Proof of Step 1]
We examine three cases.\\

\noindent\textbf{Case 1:} $u(1)\not\in\Sigma\times[1,R\kappa+1]$.\\

We first observe that if $u(1)\not\in S\Sigma$ then $F^{\kappa,R}_\eta(u(1))=-\kappa<0$. Therefore, the critical point equation implies $u(1)=(x(1),r(1))\in S\Sigma$. Since $r(1)\not\in[1,R\kappa+1]$ we have $\beta_R(r(1))=0$ and therefore
\bea
0&=F^{\kappa,R}_{\eta}(u(1))=r(1)\mathfrak{h}(r(1))-\kappa\;.
\eea
So either $r(1)\leq1$ and $\kappa=r(1)m\leq m$ or $r(1)\geq R\kappa+1$ and $\kappa=r(1)M\geq RM\kappa+M$. The former contradicts the assumption $\kappa\geq\kappa_0>3Me^C>m$ and the latter contradicts the assumption $RM>1$.\\

\noindent\textbf{Case 2:} $1\leq r(1)\leq 2$.\\

For simplicity we write $r=r(1)$ and $x=x(1)$. Using $M\geq h_\eta(x)\geq\mathfrak{h}(r)=m$ and $0\leq\beta_R(r)\leq1$ we estimate using the critical point equation
\bea
\kappa&=r\big[ \beta_R(r)h_\eta(x)+(1-\beta_R(r))\mathfrak{h}(r)\big]\\
&=r\big[ \beta_R(r)(h_\eta(x)-\mathfrak{h}(r))+\mathfrak{h}(r)\big]\\
&\leq r\big[(h_\eta(x)-\mathfrak{h}(r))+\mathfrak{h}(r)\big]\\
&\leq rh_\eta(x)\\
&\leq rM\\
&\leq 2M
\eea
This contradicts $\kappa_0>3Me^C>2M$.\\

\noindent\textbf{Case 3:} $R\kappa\leq r(1)\leq R\kappa+1$.\\

Again for simplicity we write $r=r(1)$ and $x=x(1)$. Using that $h_\eta(x)\leq\mathfrak{h}(r)$ and $\beta(r)\geq0$ we estimate
\bea
\kappa&=r\big[ \beta_R(r)(h_\eta(x)-\mathfrak{h}(r))+\mathfrak{h}(r)\big]\\
&\geq r\big[(h_\eta(x)-\mathfrak{h}(r))+\mathfrak{h}(r)\big]\\
&= rh_\eta(x)\\
&\geq R\kappa m\;.
\eea
This contradicts the assumption $RM\geq Rm>1$.\\

From the three cases we conclude that $2\leq r(1)\leq R\kappa$. The definition of $\beta_R$ and the critical point equation (see Lemma \ref{lem:critical_points}) imply 
\beq
0=F^{\kappa,R}_\eta(u(1))=r(1)h_\eta(x(1))-\kappa\;.
\eeq
This proves Step 1.
 \end{proof}

\noindent\textbf{Step 2:} $u(t)=(x(t),r(t))\in \Sigma\times(2,R\kappa)$ for all $t\in S^1$.

\begin{proof}[Proof of Step 2]
We set
\beq
I:=\{t\in[0,1]\mid u(t)\in \Sigma\times(2,R\kappa)\}
\eeq
By Step 1 we have
\beq
\frac{\kappa}{M}\leq r(0)=r(1)=\frac{\kappa}{h_\eta(x(1))}\leq\frac{\kappa}{m}\;.
\eeq
Then since $R\geq R_0\geq\frac{1}{m}e^C+1\geq\frac{1}{m}+1$ and $\kappa\geq\kappa_0\geq1$ we see
\beq
\frac{\kappa}{m}\leq (R-1)\kappa\leq R\kappa -1\;.
\eeq
Moreover, since $\kappa\geq\kappa_0\geq3Me^C\geq3M$ we have
\beq
3\leq r(0)=r(1)\leq R\kappa -1\;.
\eeq
Thus, $0\in I\neq\emptyset$. We denote by $I_0$ the connected component of $I$ containing $0$. \\

\textbf{Claim:} If $t\in I_0$ then $3\leq r(t)\leq R\kappa-1$.
\begin{proof}[Proof of the Claim]
As long as $u(t)=(x(t),r(t))\in \Sigma\times[2,R\kappa]$ the function $r(t)$ satisfies
\beq
\dot{r}(t)=-\eta r(t)\frac{\dot{\rho}_{\eta t}(x(t))}{\rho_{\eta t}^2(x(t))}\;,
\eeq
see \eqref{eqn_Ham_vfield_for_Phi_t} together with the critical point equation. Thus, for $t\in I_0$ we can estimate 
\beq
r(0)e^{-C}\leq r(t)\leq r(0)e^{C}
\eeq
where $C\equiv C(a,b)$ is defined in \eqref{eqn:def_C}. By Step 1 we have $\frac{\kappa}{M}\leq r(0)=r(1)\leq\frac{\kappa}{m}$ and we obtain
\beq
\frac{\kappa}{M}e^{-C}\leq r(t)\leq \frac{\kappa}{m}e^{C}\;.
\eeq
Since $\kappa\geq\kappa_0\geq 3Me^{C}$ we see
\beq
r(t)\geq 3\;.
\eeq
Since $R\geq R_0\geq\frac{1}{m}e^C+1$ and $\kappa\geq\kappa_0\geq1$  we have 
\bea
r(t)&\leq \kappa\frac{1}{m}e^C\\
&\leq \kappa(R-1)\\
&\leq \kappa R -1\;.
\eea
This proves the claim.
\end{proof}
By definition $I_0$ is open. By the Claim it is also closed. Since $I_0\neq\emptyset$ we conclude $I_0=I=[0,1]$. This proves Step 2.
\end{proof}
Since on $\Sigma\times(2,R\kappa-1)$ we have $F^{\kappa,R}_{t}(u)=rh_t(x)-\kappa$. Thus, we get
\beq
\lambda(X_{F^{\kappa,R}_{t}})=F^{\kappa,R}_{t}+\kappa\;.
\eeq
Therefore, Lemma \ref{lem:lambda=F_t+kappa==>A(u,eta)=kappa_eta} implies
\beq
\A_{\kappa,R}(u,\eta)=\eta
\eeq
for all critical points contained in $\Sigma\times(2,R\kappa-1)$. This finishes the proof of the Proposition.
\end{proof}

\begin{Cor}
We fix $a<b$. If $\kappa\geq\kappa_0$ and $R\geq R_0$ where $\kappa_0$ and $R_0$ are the constants in Proposition \ref{prop:large_kappa_and_R_implies_critical_points_are_train_points}  then the critical point equation and the critical value for critical points of $\A_{\kappa,R}$ with action values $a<\A_{\kappa,R}<b$ are independent of $\kappa$ and $R$. Moreover, they are critical points of $\A$ and thus correspond to discriminant points, see Remark \ref{rmk:Rabinowitz_action_for_positive_path}.
\end{Cor}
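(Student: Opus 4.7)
The plan is to apply Proposition \ref{prop:large_kappa_and_R_implies_critical_points_are_train_points} as a black box---it has already done the hard analytic work of localizing critical points in the symplectization---and then verify the corollary by unpacking definitions on the containment region that the proposition provides.

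First I invoke Proposition \ref{prop:large_kappa_and_R_implies_critical_points_are_train_points}: for $\kappa\geq\kappa_0$ and $R\geq R_0$, any critical point $(u,\eta)$ of $\A_{\kappa,R}$ with $a<\A_{\kappa,R}(u,\eta)<b$ has image $u(t)=(x(t),r(t))$ contained in $\Sigma\times(2,R\kappa)$ and satisfies $\A_{\kappa,R}(u,\eta)=\eta$. Since $\beta_R\equiv 1$ on $[2,R\kappa]$, the definition of $F^{\kappa,R}_t$ yields $F^{\kappa,R}_t(x,r)=rh_t(x)-\kappa$ along the loop, so $X_{F^{\kappa,R}_t}=X_{rh_t}$ since the additive constant $-\kappa$ is Hamiltonian-trivial. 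Consequently the first critical point equation $\dot u=\eta X_{rh_{\eta t}}(u)$ is $\kappa$- and $R$-independent, while the second equation $F^{\kappa,R}_\eta(u(1))=0$ reduces to the untruncated constraint $r(1)h_\eta(x(1))=\kappa$, which just fixes the radial scale. The critical value $\eta$ is also $\kappa$- and $R$-independent by the proposition.

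For the remaining assertions, on the containment strip the truncated functional agrees with $\frac{1}{\kappa}\A=\A_\kappa$ from Remark \ref{rmk:Rabinowitz_action_for_positive_path} (since $F^{\kappa,R}_t=F_t=rh_t-\kappa$ there), so any such critical point of $\A_{\kappa,R}$ is a genuine critical point of $\A$, and its identification with a discriminant point of $\{\varphi_t\}$ is precisely the content of that remark. I do not anticipate any real obstacle: Proposition \ref{prop:large_kappa_and_R_implies_critical_points_are_train_points} absorbs all of the analytic difficulty, and the only verification required is the identification $F^{\kappa,R}_t=rh_t-\kappa$ on the strip, which is immediate from $\beta_R\equiv 1$ there.
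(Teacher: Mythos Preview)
Your proposal is correct and follows essentially the same approach as the paper: invoke Proposition~\ref{prop:large_kappa_and_R_implies_critical_points_are_train_points} to localize the critical points in the strip where $\beta_R\equiv 1$, observe that there $F^{\kappa,R}_t=rh_t-\kappa$, and conclude independence of the critical point equation (up to the radial rescaling fixing $r(1)h_\eta(x(1))=\kappa$) and of the critical value (via the normalization $\A_{\kappa,R}(u,\eta)=\eta$), then identify these with discriminant points via Remark~\ref{rmk:Rabinowitz_action_for_positive_path}. The paper phrases the $\kappa$-dependence as ``a $\kappa$-shift in the $r$-direction of the symplectization'' where you say ``fixes the radial scale,'' but the content is identical.
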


\begin{proof}
From Proposition \ref{prop:large_kappa_and_R_implies_critical_points_are_train_points} we know that critical points with action values $a<\A_{\kappa,R}<b$ are contained in $\Sigma\times (2,R\kappa-1)$. On $\Sigma\times (2,R\kappa-1) $ we have $F^{\kappa,R}_{t}(u)=rh_t(x)-\kappa$. Thus,  $F^{\kappa,R}_{t}(u)$ is independent of $R$. Therefore, the critical point equation is independent (up to a $\kappa$-shift in the $r$-direction of the symplectization). The critical value is independent of $\kappa$ due to the normalization, see \eqref{eqn:normalized_Rabinowitz_fctl}.  Remark \ref{rmk:Rabinowitz_action_for_positive_path} implies the statement about critical points of $\A$ and discriminant points.
\end{proof}

We choose an almost complex structure $J$ which on $\Sigma\times[1,\infty)$ is of SFT-type, see \cite{Cieliebak_Frauenfelder_Oancea_Rabinowitz_Floer_homology_and_symplectic_homology}. We define for $\kappa>0$ the $L^2$-metric $\m^\kappa$ on $\L\times\R$ by
\beq
\m^\kappa_{(z,\eta)}\big((\xi,l),(\xi',l')\big):=\frac1\kappa\int_0^1\om_{(z,\eta)}(\xi,J\xi')dt+\frac{ll'}{\kappa}\;.
\eeq
Then the gradient of $\A_{\kappa,R}$ at $(u,\eta)\in\L\times\R$ equals
\beq
\nabla^{\kappa}\A_{\kappa,R}(u,\eta)=
\begin{pmatrix} 
\dot{u}(t)-\eta X_{F^{\kappa,R}_{\eta t}}(u(t))\\[2ex]
\displaystyle\int_0^1\Big[F^{\kappa,R}_{\eta t}(u(t)) +\eta t\dot{F}^{\kappa,R}_{\eta t}(u(t))\Big]dt
\end{pmatrix}
\eeq
and its norm
\beq
||\nabla^{\kappa}\A_{\kappa,R}(u,\eta)||^2_\kappa=\frac{1}{\kappa}||\dot{u}(t)-\eta X_{F^{\kappa,R}_{\eta t}}(u(t))||_2^2+\frac1\kappa\left(\int_0^1\Big[F^{\kappa,R}_{\eta t}(u(t)) +\eta t\dot{F}^{\kappa,R}_{\eta t}(u(t))\Big]dt\right)^2\;.
\eeq

Lemma \ref{lem:eta_bounded_by_A} asserts that at critical points the Lagrange multiplier $\eta$ is bounded by the action. This continues to hold for almost critical points.

\begin{Lemma}[Fundamental Lemma]\label{lem:fundamental_lemma}
There exists $\epsilon>0$ such that for all $w=(u,\eta)\in\L\times\R$ we have
\beq
||\A_{\kappa,R}(w)||_\kappa<\epsilon\quad\Longrightarrow\quad |\eta|\leq\tfrac1\epsilon(\A_{\kappa,R}(w)+1)\;.
\eeq
\end{Lemma}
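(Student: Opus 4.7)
The plan is to upgrade the critical-point identity of Lemma~\ref{lem:eta_bounded_by_A} to an inequality that tolerates the ``loop component'' of the gradient. Set
$$\xi(t) := \dot u(t) - \eta\, X_{F^{\kappa,R}_{\eta t}}(u(t)),$$
the first component of $\nabla^\kappa\A_{\kappa,R}(w)$; by the definition of $\m^\kappa$ the gradient controls $\|\xi\|_{L^2}^2 \leq \kappa\,\|\nabla^\kappa\A_{\kappa,R}(w)\|_\kappa^2$. Plugging $\dot u = \xi + \eta X_{F^{\kappa,R}_{\eta t}}$ into the formula for $\A_{\kappa,R}$ and reusing the pointwise identity
$$\lambda(X_{F^{\kappa,R}_{\eta t}}) - F^{\kappa,R}_{\eta t} = \kappa + r^2\beta'_R(r)\bigl[h_{\eta t}(x)-\mathfrak{h}(r)\bigr]\geq\kappa$$
established on $S\Sigma$ inside the proof of Lemma~\ref{lem:eta_bounded_by_A} (and trivially valid on $M\setminus S\Sigma$, where $F^{\kappa,R}_{\eta t}\equiv -\kappa$ and $X_{F^{\kappa,R}_{\eta t}}\equiv 0$), one obtains
$$\kappa\,\A_{\kappa,R}(w) = \int_0^1\lambda(\xi)\,dt + \eta\int_0^1\bigl[\lambda(X_{F^{\kappa,R}_{\eta t}}) - F^{\kappa,R}_{\eta t}\bigr](u)\,dt.$$
Splitting the sign of $\eta$ against the lower bound $\kappa$ for the second integrand yields the core inequality
$$|\eta| \leq |\A_{\kappa,R}(w)| + \frac{1}{\kappa}\Bigl|\int_0^1\lambda(\xi)\,dt\Bigr|,$$
which recovers Lemma~\ref{lem:eta_bounded_by_A} when $\xi = 0$.

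The main step is now to control the loop error $\tfrac{1}{\kappa}\bigl|\int_0^1\lambda(\xi)\,dt\bigr|$ by a constant multiple of $\|\nabla^\kappa\A_{\kappa,R}(w)\|_\kappa$. Using the SFT structure of $J$, the splitting $T(S\Sigma) = \R\,\partial_r\oplus\R\,R_\alpha\oplus\ker\alpha$ and the normalisation $J(r\partial_r)=R_\alpha$ give the pointwise bound $|\lambda(\xi)|\leq\sqrt{r}\,|\xi|_g$ on $S\Sigma$, while on the compact filling $\widetilde M$ a uniform bound $|\lambda(\xi)|\leq C_0|\xi|_g$ holds. Cauchy--Schwarz then produces
$$\frac{1}{\kappa}\Bigl|\int_0^1\lambda(\xi)\,dt\Bigr| \leq \frac{1}{\sqrt{\kappa}}\Bigl(\int_0^1\!\bigl(1+r(t)\bigr)\,dt\Bigr)^{1/2}\|\nabla^\kappa\A_{\kappa,R}(w)\|_\kappa.$$
To close the estimate one needs an a priori control of $\int_0^1 r(t)\,dt$. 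I would extract it by combining both components of the gradient: the scalar component controls $\int_0^1 F^{\kappa,R}_{\eta t}(u)\,dt$, and in the model zone $r\in[2,R\kappa]$ one has $F^{\kappa,R}_{\eta t}(u) = rh_{\eta t}(x) - \kappa \geq rm - \kappa$ with $m>0$, while the cutoff regions contribute a uniformly bounded amount. This yields
$$\int_0^1 r(t)\,dt \leq C_1\bigl(1 + |\A_{\kappa,R}(w)| + |\eta|\,\|\nabla^\kappa\A_{\kappa,R}(w)\|_\kappa\bigr).$$

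Feeding this back gives an inequality of the shape
$$|\eta| \leq |\A_{\kappa,R}(w)| + C\,\|\nabla^\kappa\A_{\kappa,R}(w)\|_\kappa\bigl(1 + |\A_{\kappa,R}(w)| + |\eta|\bigr)^{1/2};$$
choosing $\epsilon$ small enough that $C\sqrt{\epsilon}$ is small, the $|\eta|^{1/2}$ on the right can be absorbed into the left to give the stated linear bound $|\eta| \leq \epsilon^{-1}(\A_{\kappa,R}(w)+1)$. The principal obstacle is the $L^1$ bound on $r\circ u$: it forces the simultaneous use of the SFT form of $J$ on the symplectization end, the positivity $h_t\geq m>0$ that makes $F^{\kappa,R}$ grow linearly in $r$ on the model region, and the trapping behaviour in the cutoff zones --- essentially a quantitative version of Steps~1 and~2 in the proof of Proposition~\ref{prop:large_kappa_and_R_implies_critical_points_are_train_points}.
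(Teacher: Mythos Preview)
Your outline is precisely the standard scheme the paper invokes --- the paper's own proof is a one-line reference to \cite{Cieliebak_Frauenfelder_Restrictions_to_displaceable_exact_contact_embeddings} --- and the core inequality $|\eta|\leq|\A_{\kappa,R}(w)|+\tfrac{1}{\kappa}\bigl|\int_0^1\lambda(\xi)\,dt\bigr|$ together with the SFT pointwise bound $|\lambda(\xi)|\leq\sqrt{r}\,|\xi|_g$ are correctly derived. The weak point is the $L^1$ bound on $r\circ u$, on which the whole absorption step rests.

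You write that ``the scalar component controls $\int_0^1 F^{\kappa,R}_{\eta t}(u)\,dt$'', but the scalar component of the gradient is $\int_0^1\bigl[F^{\kappa,R}_{\eta t}(u)+\eta t\,\dot F^{\kappa,R}_{\eta t}(u)\bigr]\,dt$, and in the present time-dependent setting $\dot F^{\kappa,R}_t=r\beta_R(r)\dot h_t(x)$ does not vanish. Since $|\dot F^{\kappa,R}_t|\leq C'r$ with $C':=\max_{t,x}|\dot h_t(x)|$, isolating $\int F$ from the scalar component leaves an error of order $C'|\eta|\int r$ on the right; combining this with $F^{\kappa,R}_t\geq m\tilde r-\kappa$ then yields only
\[
(m-C'|\eta|)\int_0^1 r\,dt \;\leq\; \sqrt{\kappa}\,\|\nabla^\kappa\A_{\kappa,R}\|_\kappa+\kappa,
\]
which says nothing once $|\eta|\geq m/C'$. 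Hence your asserted estimate $\int r\leq C_1\bigl(1+|\A_{\kappa,R}|+|\eta|\,\|\nabla^\kappa\A_{\kappa,R}\|_\kappa\bigr)$ does not follow from the sketch, and without it the final inequality cannot be closed. This complication is absent in the autonomous situation of the cited reference (there $\dot F\equiv 0$ and the scalar component literally equals $\int F$); adapting the scheme to the twisted-periodic Hamiltonian requires an extra device to break the circular dependence between $|\eta|$ and $\int r$, which your proposal does not supply. The reference to Steps~1 and~2 of Proposition~\ref{prop:large_kappa_and_R_implies_critical_points_are_train_points} does not help here, since those arguments use the critical point equation $\dot u=\eta X_F$ to control $r(t)$ via Gronwall, whereas for a general $w\in\L\times\R$ only the approximate equation $\dot u=\xi+\eta X_F$ is available and the $\eta$-dependent drift in $\dot r$ reintroduces exactly the circularity you are trying to avoid.
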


\begin{proof}
The follows by a standard scheme, see \cite{Cieliebak_Frauenfelder_Restrictions_to_displaceable_exact_contact_embeddings}.
\end{proof}

We point out that for $r$ sufficiently large the Hamiltonian function equals $F^{\kappa,R}_t(x,r)=Mr-\kappa$. Thus, we can apply the techniques from \cite{Cieliebak_Frauenfelder_Oancea_Rabinowitz_Floer_homology_and_symplectic_homology} to obtain $L^\infty$-bounds for the $r$-coordinate of solutions of the Rabinowitz-Floer equation. $L^\infty$-bounds for the Lagrange multiplier follow again by a standard scheme from the Fundamental Lemma \ref{lem:fundamental_lemma}. Finally, there is no bubbling-off of holomorphic spheres since the symplectic manifold $M$ was assumed to be exact.

We recall, see Remark \ref{rmk:Rabinowitz_action_for_positive_path}, that a positive and twisted periodic path $\{\varphi_t\}$ of contactomorphisms defines a Rabinowitz action functional $\A$ whose critical points are in 1-1 correspondence to discriminant points. Moreover, the choice of $\kappa_0$ and $R_0$ guarantee that the critical points of $\A_{\kappa,R}$ are exactly the critical points of $\A$.

\begin{Def}
We call a path $\{\varphi_t\}\in\P$ non-degenerate if the Rabinowitz action functional $\A$ is Morse for one (and then any) $\kappa$.
\end{Def}

\begin{Rmk}
Since positive and twisted periodic path are generated by time-dependent, 1-periodic functions it is straight forward to see that they are generically non-degenerate.
\end{Rmk}

\begin{Thm}
Let  $\{\varphi_t\}$ be non-degenerate. Then for $a<b$ and $\kappa\geq\kappa_0(a,b)$, $R\geq R_0(a,b)$ Rabinowitz Floer homology $\RFH_a^b(\A_{\kappa,R})$ is well-defined and independent of $\kappa$ and $R$ up to chain compex isomorphisms. For simplicity we use $\Z/2$-coefficients.
\end{Thm}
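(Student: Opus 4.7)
The plan is to verify the three standard ingredients required to define Rabinowitz Floer homology in the action window $(a,b)$, and then to establish independence of the auxiliary parameters $\kappa$ and $R$ by a continuation argument.

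First, since $\{\varphi_t\}$ is non-degenerate, $\A_{\kappa,R}$ is Morse. The Fundamental Lemma \ref{lem:fundamental_lemma} bounds $|\eta|$ in terms of the action, and Proposition \ref{prop:large_kappa_and_R_implies_critical_points_are_train_points} confines critical points with action value in $(a,b)$ to the compact region $\Sigma\times[2,R\kappa]$. Together these imply that the set of critical points of $\A_{\kappa,R}$ in the window is finite, so I define $\RFC_a^b(\A_{\kappa,R})$ as the $\Z/2$-vector space generated by this finite set. For the differential, I consider the negative $\m^\kappa$-gradient flow of $\A_{\kappa,R}$; action decreases along flow lines, so any flow line between critical points in the window stays in $\{\A_{\kappa,R}<b\}$. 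Three uniform a priori estimates are needed on finite-energy flow lines in this sublevel: an $L^\infty$-bound on the Lagrange multiplier $\eta$ from the Fundamental Lemma via the standard bootstrap (cf.\ \cite{Cieliebak_Frauenfelder_Restrictions_to_displaceable_exact_contact_embeddings}); an $L^\infty$-bound on the $r$-coordinate obtained via a maximum principle using that $F^{\kappa,R}_t(x,r)=Mr-\kappa$ for large $r$ together with the SFT-type structure of $J$, as in \cite{Cieliebak_Frauenfelder_Oancea_Rabinowitz_Floer_homology_and_symplectic_homology}; and the absence of sphere bubbling, immediate from exactness of $(M,\om)$. After a generic perturbation of $J$ ensuring transversality, the zero-dimensional moduli spaces are finite and the one-dimensional ones admit the usual Gromov-Floer compactification, yielding a differential $\p$ with $\p^2=0$ and hence $\RFH_a^b(\A_{\kappa,R})$.

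For the independence statement, given two admissible pairs $(\kappa_0,R_0)$ and $(\kappa_1,R_1)$ with $\kappa_i\geq\kappa_0(a,b)$ and $R_i\geq R_0(a,b)$, I choose a smooth homotopy $(\kappa_s,R_s)_{s\in\R}$ between them remaining in the admissible range for all $s$, and consider the corresponding $s$-dependent Floer continuation equation. Since Proposition \ref{prop:large_kappa_and_R_implies_critical_points_are_train_points} and the three a priori bounds above apply uniformly along the homotopy, the standard count of continuation trajectories defines a chain map
\beq
\Phi:\RFC_a^b(\A_{\kappa_0,R_0})\pf\RFC_a^b(\A_{\kappa_1,R_1})\;.
\eeq
Concatenating with the reverse homotopy and using the standard two-parameter moduli space argument, the composition of $\Phi$ with its reverse is chain homotopic to the identity, so $\Phi$ is a chain homotopy equivalence. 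Combined with the canonical identification of generating sets from the preceding Corollary (critical points coincide up to a $\kappa$-shift in the $r$-direction), this yields the claimed chain complex isomorphism.

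The main obstacle is the uniform $L^\infty$-bound on the $r$-coordinate of continuation trajectories along the $(\kappa_s,R_s)$-homotopy: the shape of $F^{\kappa_s,R_s}_t$ and the cutoff $\beta_{R_s}$ vary with $s$, so one must verify that the SFT-type maximum principle from \cite{Cieliebak_Frauenfelder_Oancea_Rabinowitz_Floer_homology_and_symplectic_homology} applies uniformly in $s$ and that the admissibility thresholds $\kappa_0(a,b)$, $R_0(a,b)$ are respected throughout. Once this bound is in place, all remaining steps are entirely routine Floer theory.
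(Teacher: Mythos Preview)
Your proposal is correct and follows essentially the same approach as the paper: compactness from the Fundamental Lemma ($\eta$-bound), the SFT-type maximum principle ($r$-bound), and exactness (no bubbling), followed by a continuation argument exploiting the $\kappa,R$-independence of critical points and values established in the preceding Corollary. Your version is simply more explicit than the paper's two-sentence sketch, and your flagged obstacle---the uniform $r$-bound along the $(\kappa_s,R_s)$-homotopy---is a legitimate point the paper leaves implicit but which is indeed handled by the fact that $F^{\kappa_s,R_s}_t=Mr-\kappa_s$ for $r$ large uniformly in $s$.
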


\begin{proof}
By the previous remarks compactness up to breaking of gradient flow lines (in the sense of Floer) is guaranteed. Thus, $\RFH_a^b(\A_{\kappa,R})$ is defined. 

Since the critical points and values are independent of $\kappa$ and $R$ a continuation argument implies that $\RFH_a^b(\A_{\kappa,R})$ is independent of $\kappa$ and $R$ up to chain compex isomorphisms.
\end{proof}

\begin{Def}\label{def:RFH_periodic_for_path}
Let  $\{\varphi_t\}$ be non-degenerate. Then we define the filtered Rabinowitz Floer homology of $\{\varphi_t\}$ to be
\beq
\RFH_a^b(\{\varphi_t\}):=\RFH_a^b(\A_{\kappa,R})
\eeq
for some $\kappa\geq\kappa_0(a,b)$, $R\geq R_0(a,b)$.
\end{Def}

\begin{Rmk}
We point out that  $\RFH_a^b(\{\varphi_t\})$ possibly depends on the filling $\widetilde{M}$ of $\Sigma$, see Section \ref{sec:diffeos_of_the_circle}. Nevertheless, we suppress this in the notation.
\end{Rmk}

\begin{Def}
A path $\{\varphi_t\}\in\P$ is non-resonant if $\A\kappa$ has no integer critical values for one (and then any) $\kappa>0$, see Remark \ref{rmk:Rabinowitz_action_for_positive_path}. Then for $n,m\in\Z$ we define
\beq
\RFH_n^m(\{\varphi_t\})
\eeq
using a sufficiently small perturbation of $\{\varphi_t\}$ which is non-degenerate.
\end{Def}

\begin{Rmk}
$\RFH_n^m(\{\varphi_t\})$ is well-defined for non-resonant $\{\varphi_t\}$ since any sufficiently small perturbation is non-resonant and non-degenerate. Moreover, during a sufficiently small perturbation no critical values crosses an integer. 
\end{Rmk}

The same reasoning implies the following Theorem.

\begin{Thm}\label{thm:invariance_of_RFH}
Let $I\subset\R$ be some interval and $\{\varphi_{t,s}\}_{t\in\R,s\in I}$ be a smooth family of contactomorphisms 
such that for all fixed $\sigma\in I$ the path $\{\varphi_{t,\sigma}\}\in\P$ is non-resonant. Then
\beq
\RFH_n^m(\{\varphi_{t,\sigma}\})\cong\RFH_n^m(\{\varphi_{t,0}\})\quad\forall \sigma\in I
\eeq
up to canonical isomorphism.
\end{Thm}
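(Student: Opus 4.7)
The plan is to reduce the statement to the standard continuation scheme for Rabinowitz Floer homology. Since composing canonical isomorphisms along a chain of overlapping intervals preserves the canonical isomorphism class, it suffices to fix $\sigma_0 \in I$ and produce a canonical isomorphism $\RFH_n^m(\{\varphi_{t,\sigma}\})\cong\RFH_n^m(\{\varphi_{t,\sigma_0}\})$ for all $\sigma$ in a small neighborhood of $\sigma_0$. The key observation, exactly as in the remark preceding the theorem, is that the critical values of the Rabinowitz action functional depend continuously on $\sigma$; since the family is non-resonant for every $\sigma \in I$, a sufficiently small neighborhood of $\sigma_0$ can be chosen so that no critical value ever crosses the integers $n$ or $m$ and so that a small generic perturbation makes every functional in the family Morse without crossing $n$ or $m$ either.

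More concretely, fix a compact subinterval $[\sigma_0,\sigma_1] \subset I$ with $\sigma_1$ close to $\sigma_0$. Choose $\kappa \geq \kappa_0(n,m)$ and $R \geq R_0(n,m)$ large enough to work uniformly for every $\sigma \in [\sigma_0,\sigma_1]$; this is possible because the constants $\kappa_0, R_0$ in Proposition \ref{prop:large_kappa_and_R_implies_critical_points_are_train_points} depend only on the $C^0$-norms of $h_{t,\sigma}$ and of $\dot\rho_{t,\sigma}/\rho_{t,\sigma}^2$, which are bounded uniformly on the compact parameter interval by smoothness of the family. Pick an $s$-dependent monotone homotopy $F^{\kappa,R}_{t,s}$ interpolating $F^{\kappa,R}_{t,\sigma_0}$ and $F^{\kappa,R}_{t,\sigma_1}$, constant for $|s|$ large, and consider the associated $s$-dependent Rabinowitz action functional $\A_s$.

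The continuation map is then defined, in the usual way, by counting isolated solutions of the parametrized negative gradient equation for $\A_s$ with asymptotics in $\Crit \A_{\sigma_0}$ and $\Crit \A_{\sigma_1}$. To obtain the required compactness, I would first apply the Fundamental Lemma \ref{lem:fundamental_lemma} uniformly in $s$ to bound the Lagrange multiplier $\eta$ in terms of the action, then invoke the SFT-type behaviour of $J$ on the cylindrical end together with the techniques of Cieliebak--Frauenfelder--Oancea to bound the $r$-coordinate, and finally note that exactness of $M$ rules out sphere bubbling. Monotonicity of the homotopy ensures that the continuation map restricts to the action window $(n,m)$, and because no critical value ever crosses $n$ or $m$ along the path, the map is actually a chain isomorphism onto the corresponding window for $\sigma_1$. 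A standard homotopy-of-homotopies argument establishes independence of choices and compatibility under composition, yielding the claimed canonical isomorphism.

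The main obstacle I expect is the uniform version of the compactness estimates: whereas Proposition \ref{prop:large_kappa_and_R_implies_critical_points_are_train_points} and the Fundamental Lemma apply pointwise in $\sigma$, the continuation argument requires that the relevant constants $\kappa_0, R_0$ and $\epsilon$ can be chosen \emph{simultaneously} for all $s$, and that the $r$-bounds for parametrized Floer trajectories remain $s$-independent. This reduces to inspecting the dependencies in the proofs and invoking compactness of $[\sigma_0,\sigma_1]$, which is why, once the framework of the previous section is in place, the author defers to the same reasoning used to establish invariance of $\RFH_a^b(\A_{\kappa,R})$ in $\kappa$ and $R$.
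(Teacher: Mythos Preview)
Your proposal is correct and follows the same approach as the paper: the paper itself gives no explicit proof of this theorem, simply stating that ``the same reasoning implies the following Theorem,'' referring to the preceding remark that under non-resonance no critical value crosses an integer during a small perturbation, so the standard continuation argument applies. Your write-up is a faithful (and more detailed) expansion of exactly that reasoning.
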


\begin{Def}
We define the set of positive contactomorphisms by
\beq
\Cont^+_0(\Sigma):=\{\varphi\in\Cont_0(\Sigma)\mid \exists \{\varphi_t\}\in\P\text{ with }\varphi_1=\varphi\}
\eeq 
and define
\beq
\{\varphi^0_t\}\sim\{\varphi^1_t\}
\eeq
if there exists a smooth family $\{\varphi_{t,s}\}_{s,t\in [0,1]}$ with $\{\varphi_{t,\sigma}\}\in\P$ for all $\sigma\in[0,1]$ and $\varphi_{t,0}=\varphi^0_t$ and $\varphi_{t,1}=\varphi^1_t$ for all $t\in[0,1]$.

Then the universal cover $\widetilde{\Cont_0^+}(\Sigma)$ is given by $\sim$-equivalence classes of paths in $\Cont_0^+$ based at the identity.

For call $\wp\in\widetilde{\Cont_0^+}(\Sigma)$ non-resonant if one representative (and hence all representatives) are non-resonant.
\end{Def}
 
\begin{Rmk}\label{rmk:about_conjugation_invariance}
By \cite[Lemma 3.1.A]{Eliashberg_Polterovich_Partially_ordered_groups_and_geometry_of_contact_transformations} $\varphi\in\Cont_0^+$ if and only if the identity can be joint to $\varphi$ through a positive segment $\{\varphi_t\}_{t\in[0,1]}$ whose generating vector field need not be periodic.

We point out that $\Cont_0(\Sigma)$ acts on $\Cont_0^+(\Sigma)$ by conjugation. Indeed, if $\{\varphi_t\}$ is a positive path with contact Hamiltonian $h_t$ and $\psi\in\Cont_0$ then $\{\psi\varphi_t\psi^{-1}\}$ has contact Hamiltonian $(fh_t)\circ\psi^{-1}$ where the positive function $f$ is defined by $\psi^*\alpha=f\alpha$. 

Moreover, discriminant points of $\{\varphi_t\}$ are in 1-1 correspondence with discriminant points of $\{\psi\varphi_t\psi^{-1}\}$ via the map $(x,\eta)\mapsto(\psi(x),\eta)$. In particular, $\{\varphi_t\}$ is non-resonant if and only if $\{\psi\varphi_t\psi^{-1}\}$ is non-resonant.

The induced action of $\psi$ on $\widetilde{\Cont_0^+}(\Sigma)$ is denoted by $\c_\psi$.
\end{Rmk}
 
\begin{Def}
Let $\wp\in\widetilde{\Cont_0^+}(\Sigma) $ be non-resonant. We define for integers $n,m\in\Z$
\beq
\RFH_n^m(\wp):=\RFH_n^m(\{\varphi_t\})
\eeq
where $\{\varphi_t\}$ a representative of $\wp$.
\end{Def}

Theorem \ref{thm:invariance_of_RFH} has the following two important corollaries.

\begin{Cor}
Let $\wp\in\widetilde{\Cont_0^+}(\Sigma) $ be non-resonant then $\RFH_n^m(\wp)$ is well-defined, i.e.~independent of the choice of representative.
\end{Cor}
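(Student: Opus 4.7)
Let $\{\varphi^0_t\}$ and $\{\varphi^1_t\}$ be two representatives of the non-resonant class $\wp\in\widetilde{\Cont_0^+}(\Sigma)$. By the definition of $\sim$ they are joined by a smooth family $\{\varphi_{t,s}\}_{s\in[0,1]}$ in $\P$, and by hypothesis both endpoints are non-resonant. My plan is to apply Theorem \ref{thm:invariance_of_RFH}, which requires a homotopy through $\P$ whose every intermediate path $\{\varphi_{t,\sigma}\}$ is non-resonant. Producing such a homotopy is the heart of the argument.

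I would begin by absorbing the small non-degenerate non-resonant perturbations that are implicit in the definition of $\RFH_n^m$ at each endpoint into the family, so that the two endpoints are themselves non-degenerate. On the non-degenerate stratum of $\P$ each critical value of $\A_\kappa$ is a smooth function of the path by the implicit function theorem, so the resonant locus decomposes into a countable family of smooth codimension-one subvarieties, one for each integer and each local branch of critical points. A standard Sard--Smale transversality argument in the space of smooth families rel endpoints then allows one to perturb $\{\varphi_{t,s}\}$ so that (i) it is non-degenerate for all but finitely many isolated $\sigma$, and (ii) at each isolated degenerate or resonant parameter the family meets the relevant singular stratum transversally. The failure of non-resonance can then be removed parameter by parameter via a small local detour -- either through a nearby birth--death point that terminates the offending wall, or in a direction along which the relevant critical value is moved off the integer level. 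After these finitely many modifications every intermediate path is non-resonant, and Theorem \ref{thm:invariance_of_RFH} delivers the canonical isomorphism $\RFH_n^m(\{\varphi^0_t\})\cong\RFH_n^m(\{\varphi^1_t\})$.

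The main obstacle is the wall-avoidance step, specifically the claim that the walls do not globally separate the relevant connected component of $\P$. The crucial topological input is that each wall has boundary in the birth--death locus, where the corresponding critical point becomes degenerate, and this boundary is itself of codimension two in $\P$, so walls do not close up into globally separating hypersurfaces. Verifying this carefully is a standard but technical universal moduli argument in the style of \cite{Cieliebak_Frauenfelder_Oancea_Rabinowitz_Floer_homology_and_symplectic_homology}, from which the required Sard--Smale transversality in the space of smooth homotopies rel endpoints follows, and which is what makes the whole reduction to Theorem \ref{thm:invariance_of_RFH} go through.
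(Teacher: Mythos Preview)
Your argument is significantly more involved than necessary, and it overlooks the structural feature of twisted periodic paths that makes the corollary an \emph{immediate} consequence of Theorem~\ref{thm:invariance_of_RFH}.

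Recall that every $\{\varphi_t\}\in\P$ satisfies $\varphi_m=\varphi_1^m$ for all $m\in\Z$ (see the Remark following Definition~\ref{def:pos_twisted_periodic_paths}). Hence a pair $(x,m)$ with $m\in\Z$ is a discriminant point if and only if $\varphi_1^m(x)=x$ and $(\varphi_1^m)^*\alpha|_x=\alpha|_x$; this condition involves only the endpoint $\varphi_1$, not the particular positive path used to reach it. Since the critical values of $\A_\kappa$ coincide with the Lagrange multiplier $\eta$, non-resonance of $\{\varphi_t\}$ is therefore a property of $\varphi_1$ alone. This is exactly the content of the parenthetical clause ``(and hence all representatives)'' in the definition of a non-resonant class $\wp$.

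The equivalence relation defining $\widetilde{\Cont_0^+}(\Sigma)$ is homotopy rel endpoints (this is what ``universal cover'' means), so every slice $\{\varphi_{t,\sigma}\}$ of the connecting family has the same time-one map $\varphi_1$. Consequently, if one representative of $\wp$ is non-resonant then so is every intermediate path $\{\varphi_{t,\sigma}\}$: the resonant walls you are trying to avoid are in fact \emph{empty} along the entire homotopy. Theorem~\ref{thm:invariance_of_RFH} then applies directly to the given family, with no perturbation whatsoever, and this is the paper's (implicit) proof.

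Your wall-avoidance programme---perturbing the family to non-degeneracy, analysing codimension-one resonant strata and their birth--death boundaries, and invoking a Sard--Smale transversality argument---is therefore unnecessary. It is also, as you yourself concede, incomplete: you have not actually established that the resonant walls fail to separate the relevant component of $\P$, and the sketch you give of how walls terminate at birth--death loci would require substantial work to make rigorous. The twisted-periodicity observation dissolves this obstacle entirely.
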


\begin{Cor}
Let $\wp\in\widetilde{\Cont_0^+}(\Sigma) $ be non-resonant and let $\psi\in\Cont_0(\Sigma)$. Then for integers $n<m\in\Z$
\beq
\RFH_n^m(\c_\psi(\wp))\cong\RFH_n^m(\wp).
\eeq
where we recall that $\c_\psi(\wp)$ is $\wp$ conjugated by $\psi$.
\end{Cor}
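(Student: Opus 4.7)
The plan is to deduce the corollary directly from the invariance Theorem \ref{thm:invariance_of_RFH}, by constructing a smooth family of non-resonant paths interpolating between a representative of $\wp$ and a representative of $\c_\psi(\wp)$. The idea is simple: use a path of contactomorphisms from $\id$ to $\psi$ to generate a path of conjugations.

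More precisely, I would proceed as follows. Let $\{\varphi_t\}\in\P$ be a representative of $\wp$. Since $\psi\in\Cont_0(\Sigma)$, there exists a smooth path $\{\psi_s\}_{s\in[0,1]}$ in $\Cont_0(\Sigma)$ with $\psi_0=\id$ and $\psi_1=\psi$. Define
\beq
\varphi_{t,s}:=\psi_s\varphi_t\psi_s^{-1},\quad t\in\R,\ s\in[0,1].
\eeq
This is a smooth family of smooth paths of contactomorphisms based at the identity, with $\varphi_{t,0}=\varphi_t$ and $\varphi_{t,1}=\psi\varphi_t\psi^{-1}$, so $\{\varphi_{t,1}\}$ represents $\c_\psi(\wp)$ by definition.

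The key step is to check that for each fixed $\sigma\in[0,1]$, the path $\{\varphi_{t,\sigma}\}$ lies in $\P$ and is non-resonant. Positivity and twisted periodicity are verified using Remark \ref{rmk:about_conjugation_invariance}: if $h_t$ denotes the contact Hamiltonian of $\{\varphi_t\}$ and $f_\sigma>0$ is defined by $\psi_\sigma^*\alpha=f_\sigma\alpha$, then the contact Hamiltonian of $\{\varphi_{t,\sigma}\}$ is $(f_\sigma h_t)\circ\psi_\sigma^{-1}>0$, which is 1-periodic in $t$; twisted periodicity follows from $\varphi_{t+1}=\varphi_t\varphi_1$ by direct computation. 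Non-resonance is ensured by the bijection $(x,\eta)\mapsto(\psi_\sigma(x),\eta)$ between discriminant points of $\{\varphi_t\}$ and $\{\varphi_{t,\sigma}\}$ from the same remark, which preserves the $\eta$-coordinate and hence the set of critical values of the associated Rabinowitz action functionals $\A$ modulo $\kappa$. Since $\wp$ is non-resonant, no critical value of $\A_\kappa$ is an integer, and this property is preserved under conjugation.

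Applying Theorem \ref{thm:invariance_of_RFH} to the family $\{\varphi_{t,\sigma}\}_{\sigma\in[0,1]}$ then yields
\beq
\RFH_n^m(\c_\psi(\wp))=\RFH_n^m(\{\varphi_{t,1}\})\cong\RFH_n^m(\{\varphi_{t,0}\})=\RFH_n^m(\wp),
\eeq
which is the desired conclusion. The only genuine content of the argument is the conjugation-invariance of the discriminant and of positivity, already recorded in Remark \ref{rmk:about_conjugation_invariance}; no new Floer-theoretic estimate is required, since the invariance theorem does all the analytic work.
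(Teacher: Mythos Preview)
Your proof is correct and follows exactly the same approach as the paper: choose an isotopy $\psi_s$ from $\id$ to $\psi$, set $\varphi_{t,s}:=\psi_s\varphi_t\psi_s^{-1}$, invoke Remark~\ref{rmk:about_conjugation_invariance} to see that each conjugated path is positive, twisted periodic, and non-resonant, and then apply Theorem~\ref{thm:invariance_of_RFH}. The paper's proof is simply a terser version of yours.
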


\begin{proof}
Let $\psi_s$ be an isotopy connecting the identity and $\psi$. Then we can apply Theorem \ref{thm:invariance_of_RFH} to $\varphi_{t,s}:=\psi_s\varphi_t\psi^{-1}_s$, see Remark \ref{rmk:about_conjugation_invariance}.
\end{proof}

\section{Diffeomorphisms of the circle}\label{sec:diffeos_of_the_circle}

We consider $\Sigma=S^1:=\R/\Z$ with contact form $dx$. Let $a\in\R\setminus\Q$ be an irrational and positive number. Then the maps 
$\varphi:S^1\to S^1$ defined by
\beq
\varphi_t(x):=x+at
\eeq
is a positive and twisted period contact isotopy which is non-resonant. A pair $(x,\eta)\in S^1\times\R$ is a discriminant point if and only if $\eta a\in\Z$. Although $\{\varphi_t\}$ is not non-degenerate it is of Morse-Bott type. Hence we can define Rabinowitz Floer homology once we choose a filling. We consider two fillings of $S^1$. 

First, we fill $S^1$ by the standard disk. In that case $S^1$ is Hamiltonianly displaceable in the symplectically filled symplectization and thus Rabinowitz Floer homology vanishes. In fact, it holds that $\dim\RFH_n^m(\{\varphi_t\})\in\{0,2\}$.

If we fill $S^1$ by a torus with a small disk removed we see that iterations of the Reeb orbit $\cong S^1$ lie all in different free homotopy classes and hence cannot be joint by a Floer differential. In particular, the complex is acyclic and 
\beq
\dim \RFH_n^m(\{\varphi\}) =2\left(\bigg\lfloor\frac{m}{a}\bigg\rfloor-\bigg\lfloor\frac{n}{a}\bigg\rfloor\right)\;.
\eeq
In particular, it is possible to recover the rotation number $a$. As remarked earlier we point out that Rabinowitz Floer homology depend on the filling.

\section{A homological Maslov index (boundary value case)}\label{sec:homological_Maslov_index_boundary_case}

We recall the setup. Let $(\Sigma,\xi)$ be a closed, cooriented contact manifold and $\alpha$ a fixed contact form. We assume that there exists a compact exact symplectic manifold $(\widetilde{M},d\widetilde{\lambda})$ with $\Sigma=\p \widetilde{M}$ and $\alpha=\lambda|_\Sigma$. We attach to $\widetilde{M}$ the positive part of the symplectization of $\Sigma$, that is,
\beq
M:=\widetilde{M}\cup_\Sigma\Sigma\times\{r\geq1\}\;.
\eeq
On $M$ we define a 1-form $\lambda$ by $\widetilde{\lambda}$ on $\widetilde{M}$ and $\lambda=r\alpha$ on $\Sigma\times\{r\geq1\}$. In particular, $(M,\om=d\lambda)$ is an exact symplectic manifold. We point out, that the entire symplectization $S\Sigma$ of $\Sigma$ embeds into $M$ via the flow of the Liouville vector field of $\lambda$.

In addition we assume that we are given two Lagrangian submanifolds $L_0,L_1$ inside $M$ with following properties for $i=0,1$:

\begin{itemize}
\item $\lambda|_{L_i}=0$ and $L_i\pitchfork\Sigma=:\Lambda_i$ is a closed Legendrian submanifold and
\item $L_i\cap\big(\Sigma\times\{r\geq1\}\big)=\Lambda_i\times\{r\geq1\}$.
\end{itemize}
An example is given by $M=T^*B$, $\Sigma=S^*B$, a unit cotangent bundle, and $L_i=T_{q_i}^*B$ for $q_i\in B$. We define the path space
\beq
\Ps:=\{u:[0,1]\to M\mid u(i)\in L_i,i=0,1\}\;.
\eeq
For a function 
\beq
F:M\times S^1\to\R
\eeq
we define the Rabinowitz action functional
\beq
\A:\Ps\times\R\to\R
\eeq
by precisely the same formula as above, see \eqref{eqn:Rabinowitz_functional}. Since $\lambda|_{L_i}=0$ there are no boundary terms and the critical point equation is unchanged. Thus, a pair $(u,\eta)\in\Ps\times\R$ is a critical point of $\A$ if and only if the following equations hold
\beq
\left.
\begin{array}{ll}
\displaystyle\dot{u}(t)=\eta X_{F_{\eta t}}(u(t))\\[2ex]
\displaystyle F_\eta(u(1))=0
\end{array}\right\}
\eeq
Again following Givental \cite{Givental_Periodic_mappings_in_symplectic_topology,Givental_Nonlinear_generalization_of_the_Maslov_index,Givental_The_nonlinear_Maslov_index} we make the following definition.

\begin{Def}
Let $\{\varphi_t\}$ be a smooth path in $\Cont(\Sigma)$. Then a pair $(x,\eta)\in\Lambda_0\times\R$ is called a Legendrian discriminant point (with respect to $\{\varphi_t\}$) if 
\beq
\varphi_\eta(x)\in\Lambda_1\;.
\eeq 
\end{Def}

Using Proposition \ref{prop:Hamiltonian_lift_of_path_in_Cont} we assign to the path $\{\varphi_t\}$ the contact Hamiltonian $H_t:S\Sigma\to\R$. If we set 
\beq
F_t(x,r):=H_t(x,r)-1
\eeq
then the critical points of $\A$ are again in 1-1 correspondence with Legendrian discriminant points. For a positive and twisted path $\{\varphi_t\}\in\P(\Sigma,\xi)$ we define as in Definition \ref{def:pos_twisted_periodic_paths}
\beq
\RFH_a^b(\{\varphi_t\};L_0,L_1)\;.
\eeq

\section{Asymptotics and obstructions to positive loops in $\Cont(\Sigma)$}

We assume the same setting as in section \ref{sec:homological_Maslov_index_boundary_case}. We fix an element $\{\varphi_t\}\in\P$ and consider the maps induced by inclusion
\beq
i^{n,m}:\RFH_0^m(\{\varphi_t\})\to\RFH_0^n(\{\varphi_t\})\;.
\eeq
Then the sequence $n\mapsto\dim(\im i^{n,m})$ is non-increasing and we set
\beq
\mu(m):=\min_n\{\dim(\im i^{n,m})\}\;.
\eeq
Then, by naturality, the numbers $\mu(m)$ are non-decreasing and we consider the growth rate of $m\mapsto \mu(m)$.


The following is Theorem \ref{thm:no_loops_introduction} from the Introduction.

\begin{Thm}\label{thm:no_loops}
Let $B$ be a closed manifold with finite fundamental group such that the rational cohomology ring has at least two generators. Then $\Sigma:=S^*B$ with its standard contact structure $\xi$ admits no closed positive loops in $\Cont(\Sigma,\xi)$.
\end{Thm}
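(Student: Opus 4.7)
The plan is to reach a contradiction by pitting two opposing growth rates for the invariant $m\mapsto\mu(m)$ attached to the Reeb flow on $S^*B$ against each other: the cohomology hypothesis forces $\mu(m)\to\infty$, whereas the existence of a positive loop would bound $\mu(m)$.

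First I would specialize to the canonical geometry. Equip $\Sigma=S^*B$ with $\alpha=\lambda_{\rm can}|_\Sigma$ coming from the Liouville filling $T^*B$, so that the Reeb vector field is the geodesic vector field; its flow $\{\varphi_t^{\rm Reeb}\}$ has contact Hamiltonian $h_t\equiv 1$ and hence belongs to $\P(\Sigma,\xi)$. Pick generic points $q_0\neq q_1\in B$ and take the Lagrangian cotangent fibers $L_i:=T^*_{q_i}B\subset T^*B$, with $\Lambda_i:=L_i\cap\Sigma$; these fit the framework of Section~\ref{sec:homological_Maslov_index_boundary_case}. Legendrian discriminant points $(x,\eta)$ of $\{\varphi_t^{\rm Reeb}\}$ are then in bijection with unit-speed geodesic arcs from $q_0$ to $q_1$ of length $\eta$, and by Lemma~\ref{lem:lambda=F_t+kappa==>A(u,eta)=kappa_eta} the Rabinowitz action at such a critical point equals $\eta$.

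Next I would invoke the Serre--Gromov theorem. Under the hypothesis that $\pi_1(B)$ is finite and $H^*(B;\mathbb{Q})$ has at least two ring generators, the partial sums $\sum_{k\leq N}\dim H_k(\Omega B;\mathbb{Q})$ are unbounded (see Paternain's book). Morse theory on $\Omega_{q_0}^{q_1}B$ applied to the energy functional therefore implies that the number of geodesic chords from $q_0$ to $q_1$ of length at most $m$ grows without bound as $m\to\infty$. Via the identification above this yields $\dim\RFH_0^m(\{\varphi_t^{\rm Reeb}\};L_0,L_1)\to\infty$, and because the Floer boundary strictly respects the action filtration (so it can only pair generators in a controlled window), a persistent-homology type argument promotes this to $\mu(m)\to\infty$.

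Now suppose for contradiction that a positive loop $\{\ell_t\}$ exists in $\Cont(\Sigma,\xi)$, with $\ell_0=\ell_1=\mathrm{id}$. Extended by 1-periodicity (twisted periodicity collapses to ordinary 1-periodicity since $\ell_1=\mathrm{id}$) we have $\{\ell_t\}\in\P$, with action $T_\ell:=\int_0^1 h^\ell_t\,dt>0$. The plan is to exploit the fact that positive contact Hamiltonians form a convex cone to build a smooth family $\{\varphi_{t,s}\}\subset\P$ of positive twisted periodic paths that, combined with time reparametrizations made possible by $\ell_1=\mathrm{id}$, realizes an isomorphism
\[
\RFH_0^{m}(\{\varphi_t^{\rm Reeb}\};L_0,L_1)\;\cong\;\RFH_0^{m-T_\ell}(\{\varphi_t^{\rm Reeb}\};L_0,L_1)
\]
via Theorem~\ref{thm:invariance_of_RFH} (after generic perturbations to ensure non-resonance throughout the family). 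Iterating this shift bounds $\mu(m)$ uniformly in $m$, contradicting the previous step.

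I expect the third step to be the main obstacle. A naive convex interpolation of Hamiltonians by itself does not translate into a shift of the action window; it typically \emph{adds} critical points rather than compressing the spectrum. The correct construction must couple the loop's insertion with a reparametrization of time that compensates for the extra Hamiltonian contribution, so that the total effect on the filtered Rabinowitz complex is exactly a translation by $-T_\ell$. Making such a family of positive twisted periodic paths explicit, verifying non-resonance along the way, and tracking its effect on the action filtration is where the technical core of the proof must lie.
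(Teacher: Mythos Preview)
Your contradiction structure is correct, but both halves differ from the paper's, and the obstacle you flag in Step~3 is real---and avoidable.

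For the upper bound, the paper does \emph{not} construct an action-shift isomorphism on $\RFH$ of the Reeb flow. Instead it computes $\mu(m)$ for the hypothetical loop $\{\varphi_t\}$ itself. Since $\varphi_1=\mathrm{id}$, the set of discriminant points is $1$-periodic in $\eta$: the pair $(x,\eta)$ is a discriminant point if and only if $(x,\eta+1)$ is. Hence the number of generators of the Rabinowitz complex in the window $[0,m]$ grows at most linearly in $m$, and therefore so does $\mu(m)$. That single observation replaces your entire Step~3. One then homotopes the loop's contact Hamiltonian $h_t$ through positive $1$-periodic functions to the geodesic energy $k=\tfrac12|p|_g^2$ of a bumpy metric; the growth rate of $\mu(m)$ is preserved under this homotopy, so the geodesic flow also has at most linear growth. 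Note, incidentally, that Theorem~\ref{thm:invariance_of_RFH} as stated preserves a \emph{fixed} integer window $(n,m)$ along a non-resonant family; it does not by itself produce the shift $\RFH_0^m\cong\RFH_0^{m-T_\ell}$ that your Step~3 requires, so the gap you anticipated is genuine within this paper's framework.

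For the lower bound, the dichotomy the paper uses is \emph{at most linear} versus \emph{at least quadratic}, not bounded versus unbounded. Moreover, your implication ``unboundedly many geodesic chords $\Rightarrow$ $\dim\RFH_0^m\to\infty$'' goes the wrong way: Morse theory on $\Omega B$ produces critical points from homology, not homology from critical points, so a chord count alone does not force $\mu(m)$ to grow. The paper instead identifies $\RFH$ of the geodesic flow in positive degrees with $H_*(\Omega B)$ via Merry's theorem, passes from linear growth in action to linear growth in degree via Gromov's theorem, and then invokes Sullivan's minimal models (as in the Bott--Samelson argument in Besse) to show that, under the hypotheses on $B$, the rational homology of $\Omega B$ grows at least quadratically in degree. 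This yields the contradiction.
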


\begin{Rmk}
Theorem \ref{thm:no_loops} can be seen as a complement to a result by Chernov-Nemirovski. Indeed, if the fundamental group of the manifold $B$ is infinite then there exist no positive loops in $\Cont(S^*B)$, see \cite[Corollary 8.1]{Chernov_Nemirovski_Nonnegative_Legendrian_isotopy_in_ST*M}.

According to Eliashberg-Kim-Polterovich \cite{Entov_Polterovich_Kim_Geometry_of_contact_transformations_and_domains_orderability_versus_squeezing} there are never positive \textit{contractible} loops of contactomorphism of $S^*B$ since $S^*B$ is orderable. Strictly speaking Eliashberg-Kim-Polterovich could not handle the case of a manifold whose fundamental group is infinite but has only finitely many conjugacy classes. This is covered by the aforementioned result by Chernov-Nemirovski.

If the fundamental group is finite but the rational cohomology rings is generated by only one element there exist examples of positive loops in $\Cont(S^*B)$. For instance the geodesic flow of any P-metric gives rise to such a positive loop, see \cite{Besse_manifolds_all_of_whose_geodesics_are_closed}. Of course, these loops are not contractible by the result of Eliashberg-Kim-Polterovich.
\end{Rmk}

\begin{proof}[Proof of Theorem \ref{thm:no_loops}]
We  argue by contradiction. Let $\{\varphi_t\}$ be a positive loop in $\Cont(\Sigma,\xi)$. In particular, it is twisted periodic: $\{\varphi_t\}\in\P$. As above we set $L_i:=T_{q_i}^*B$ for $q_i\in B$. Then for generic $q_0,q_1\in B$ the Rabinowitz Floer homology
\beq
\RFH_a^b(\{\varphi_t\};L_0,L_1)
\eeq
is well defined. Since $\{\varphi_t\}$ is a loop the number of critical points of the underlying Rabinowitz action functional growth linearly with the action value. Therefore, the growth rate of the function $m\mapsto\mu(m)$ is at most linear.

As in Definition \ref{def:pos_twisted_periodic_paths} we assign the 1-periodic, positive function $h_t:\Sigma\to\R_{>0}$ to $\{\varphi_t\}$. We can homotope $h_t$ through positive and 1-periodic functions to the function 
\beq
k(q,p):=\tfrac12|p|_g^2
\eeq
where $g$ is a bumpy metric on $B$. The contact flow $\{\psi_t\}$ induced by $k$ is just the geodesic flow on $S^*_gB$ associated to $g$. Arguing as in \cite[Section 5]{Albers_Frauenfelder_Spectral_invariants_in_RFH} it follows that the growth rate of the positive, twisted periodic path $\{\psi_t\}$ coincides with the growth rate of $\{\varphi_t\}$. In particular, the growth rate of $\{\psi_t\}$ is at most linear.

According to \cite[Theorem B]{Merry_Lagrangian_RFH_and_twisted_cotangent_bundles} the Rabinowitz Floer homology in positive degrees of the path $\{\psi_t\}$ is isomorphic to the homology of the based loop space. It follows from Gromov's theorem \cite{Gromov_homotopical_effects_of_dilatation,Gromov_Metric_structures_BOOK}, see also \cite{Paternain_Book_Geodesic_flows}, that if the homology of the loop space growth at most linearly in action then it also growths at most linearly in degree. Using the theory of minimal models by Sullivan \cite{Sullivan_differential_forms_and_the_topology_of_manifolds} and arguing as in the proof of the Bott-Samelson theorem in \cite{Besse_manifolds_all_of_whose_geodesics_are_closed} it follows that the based loop space of a closed manifold with finite fundamental group such that the rational cohomology ring has at least two generators growths at least quadradically.

This contradiction finishes the proof of the theorem.
\end{proof}

%
%
%
%
%
%
%
%
%
%
%
%
%
%
%
%

%
\bibliographystyle{amsalpha}
\bibliography{../../../../Bibtex/bibtex_paper_list}
\end{document}